\documentclass[12pt,a4paper,twoside,reqno]{amsart}
\usepackage{cite}
\usepackage{amsmath,dsfont}
\usepackage{amssymb}
\usepackage{amsthm}
\usepackage{graphicx}
\usepackage{float}
\usepackage[linkcolor=blue, urlcolor=blue, citecolor=blue, colorlinks, bookmarks]{hyperref}
\usepackage{amsfonts}
\usepackage{mathrsfs}
\usepackage{algorithm}
\allowdisplaybreaks

\allowdisplaybreaks

\usepackage[linkcolor=blue, urlcolor=blue, citecolor=blue, colorlinks, bookmarks]{hyperref}
\usepackage{amsfonts}
\usepackage{mathrsfs}
\usepackage[top=2cm, bottom=2cm, left=2.5cm, right=2.5cm]{geometry} 

\usepackage{amsfonts,amssymb,amscd,amsmath,enumerate,verbatim,calc}
\usepackage{algpseudocode}
%\DeclareRobustCommand{\rchi}{{\mathpalette\irchi\relax}}
%\newcommand{\irchi}[2]{\raisebox{\depth}{$#1\chi$}}
\newcommand{\mychi}{\raisebox{0pt}[1ex][1ex]{$\chi$}}
\newcounter{Theorem}

\newtheorem{corollary}[Theorem]{Corollary}
\newtheorem{remark}[Theorem]{Remark}
\newtheorem{definition}[Theorem]{Definition}
\newtheorem{example}[Theorem]{Example}
\newtheorem{theorem}[Theorem]{Theorem}

\numberwithin{Theorem}{section} \numberwithin{equation}{section}

\linespread{1.1}\Large
%\author{Anurag K. Patel, H. Chandra}
%\date\today

\begin{document}
	
	\title []{ A class of Zero Divisors and Topological Divisors of Zero in some Banach algebras }
	
	%\author{Anurag K. Patel, H. Chandra}
	
	\author[]{Anurag Kumar Patel}
	\address{Anurag Kumar Patel \\ Department of Mathematics \\ Banaras Hindu University \\ Varanasi 221005, India}
	\email{anuragrajme@gmail.com}
	
	\author[]{Harish Chandra}
	\address{Harish Chandra \\Department of Mathematics \\ Banaras Hindu University \\ Varanasi 221005, India}
	\email{harishc@bhu.ac.in}
	
%	\CorrespondingAuthor{Anurag Kumar Patel}
	
	%\date{\today}
	
	\keywords{Banach algebra, Composition operators, Zero divisor, Measurable functions}
	\subjclass{46E25, 47B33, 13A70, 28A20}
	
	\thanks{The first author is supported by the Council of Scientific and Industrial Research (CSIR) NET-JRF, New Delhi, India, through grant $09/013(0891)/2019-EMR-I$} 
	
	\begin{abstract}
	In this paper, we establish necessary and sufficient conditions that must be met for weighted composition operators to act as zero divisors in $\mathcal{B}(\ell^p).$ We also give a necessary condition and a sufficient condition for a composition operators to act as zero divisors in $\mathcal{B}(L^p(\mu)).$ Subsequently, we characterize TDZ in $C(X)$.
	Afterward, we establish that a multiplication operator $M_h$ in $\mathcal{B}(C(X))$ becomes a TDZ if and only if $h$ is a TDZ in $C(X).$
	Further, motivated by the definition of TDZ, we introduce  notions of polynomially TDZ and strongly TDZ and prove  
	that every element in $C(X)$ and in $L^\infty(\mu)$ is a polynomially TDZ. We then prove that a multiplication operator $M_h$ in $\mathcal{B}(C(X))$ as well as  in $\mathcal{B}(L^p(\mu))$ is a polynomially TDZ. 
	Lastly, we show that each $T\in \mathcal{B}(H)$, where $H$ is a separable Hilbert space, is a strongly TDZ.
	\end{abstract}
	
	\maketitle
	
	\section{Introduction}
	The notion of zero divisor{\cite{Gelfand Silov}} is well known from ring theory. In a ring $\mathfrak{R},$ an element $z$ is termed a left $(right)$ zero divisor if $zx=0~(xz=0)$ for some $0 \neq x\in \mathfrak{R}$ and if it is either a left zero divisor or a right zero divisor, we call it a zero divisor.
	
	 The notion of topological divisors of zero($\textbf{TDZ}$) is an important generalization in a Banach algebra, introduced by Shilov in {\cite{Shilow}} with significant later contributions by Zelasko, Kaplansky et al. In a Banach algebra $\mathfrak{B},$ an element $z$ is called a TDZ if there exists a sequence $\{z_n\}_{n=1}^\infty$ in $\mathfrak{B}$ with $\|z_n\|=1,$ for each $n\geq1$ such that either $zz_n \to 0$ or $z_nz\to 0$ as $n\to \infty$.

	In {\cite{Schulz}} F. Schulz, R. Brits, M. Hasse  gave a characterization of Banach algebras in which every element is a TDZ. The purpose of the present paper is to determine zero divisors and TDZ in some Banach algebras which do not fall in the preceding category.

	In this paper, we consistently employ the notation $\mathbb{N}$ to signify the set of positive integers, $\mathbb{C}$ designates the set of complex numbers, $\mathcal{B}(X)$ denotes the Banach algebra of all bounded linear operators on $X,$ $\mathcal{R}(f)$ denotes the range of a function $f,$ $[f]$ denotes the linear span of $f,$ $C(X)$ denotes the space of all complex-valued continuous functions on $X$ and $Z(f)=\{x:f(x)=0\}$ refers to the zero set of $f$.

		Let $(X,\Omega,\mu)$ be a $\sigma$-finite measure space. For $1\leq p\leq\infty$, we abbreviate the Lebesgue space $L^p(X,\Omega,\mu)$ to $L^p(\mu),$ and denote the $L^p$-norm by $\|\cdot\|_p.$ Note that $L^p(\mu)$ is a Banach space under the $L^p$-norm $(1\leq p\leq \infty)$ (see \rm{\cite{Bollobas,Sheldom,Royden}}). 
%		$L^p(\mu)=\{f:X\to \mathbb{C} : f \textrm{ is measurable and } \int_{X}\vert f\vert^{p}d\mu<\infty \}$ and $L^{\infty}(\mu)=\{f:X\to \mathbb{C} : f \textrm{ is measurable and essential supremum of } \vert f\vert<\infty\}.$
		 
%		\begin{enumerate}
%			\item[(i)] For $1\leq p<\infty,~L^p(\mu)$ is a Banach space under the $L^p-$norm defined as $$\vert\vert f\vert\vert_p=(\int \vert f\vert^p d\mu)^\frac{1}{p}~\forall~f\in L^p(\mu).$$ 
%			\item[(ii)] $L^{\infty}(\mu)$ is a Banach algebra with identity under the essential supremum norm defined as $$\|f\|=\|f\|_\infty=\inf \{k:\mu\{t:\vert f(t)\vert>k\}=0\}~\forall~f\in L^{\infty}(\mu).$$   
%		\end{enumerate}
		In the scenario where $X$ equals $\mathbb{N},$ and $\mu$ represents the counting measure on $\mathbb{N},$ the $L^p(\mu)$ becomes $\ell^p.$

		Let $(X,\Omega,\mu)$ be a $\sigma$-finite measure space  and $u\in L^\infty(\mu).$ Let $\phi$ be a measurable transformation on $X$ and suppose that the measure $\mu_\phi$ defined by $\mu_\phi(E)=\mu(\phi^{-1}(E))~\forall~E\in \Omega,$ is absolutely continuous w.r.t. $\mu$ (we write $\mu_\phi\ll\mu,$ as usual). Then we can define the linear transformation $uC_\phi:L^p(\mu) \to L^p(\mu)$ as $$uC_\phi f(x)= u(x)f(\phi(x))~\text{for all}~ x\in X~\text{and}~f\in L^p(\mu)$$
		and the multiplication operator $M_u:L^p(\mu)\to L^p(\mu)$ defined as $$M_u(f)(x)=u(x) f(x)~\forall~ x\in X.$$
		Assumption $\mu_\phi\ll\mu$ is used in the well-definedness of $uC_\phi$. Further, if the transformation $uC_\phi$ is bounded, then it is called a weighted composition operator on $L^p(\mu).$ Specifically, when considering the constant function $1$ as $u$, the associated operator $uC_\phi$ on $L^p(\mu)$ becomes $C_\phi$ and is called a composition operator (see \rm{\cite{Sheldom, RKM, Takagi, Zimmer}}).

		In \cite{RKOMAL}, R. K. Singh and B. S. Komal have shown that a linear transformation $C_\phi$ on $\ell^p~(1\leq p<\infty)$ defined as $$C_\phi(f)=\sum_{n=1}^{\infty}f(n)\mychi_{\phi^{-1}(n)}$$ is bounded if and only if $\{|\phi^{-1}(n)|: n\in \mathbb{N}\}$ is a bounded set, where $|\phi^{-1}(n)|$ represents the cardinality of the set $\phi^{-1}(n).$ Further, in \cite{DKHC}, it is proved that a linear transformation $uC_\phi$ on $L^p(\mu)$ is bounded if and only if  $\frac{d\mu\phi^{-1}}{d\mu}:X\setminus Z(u)\to \mathbb{C}$ is essentially bounded, where $\frac{d\mu\phi^{-1}}{d\mu}$ represents the Radon-Nikodym derivative.

		Weighted composition operators constitute a significant and diverse class of operators. There are several important reasons to study these operators. For example, Lamperti \cite{Royden} showed that every isometry on $L^p(\mu)$ spaces is a weighted composition operator. Most recently, in \cite{Strivedi}, it is shown that analytic m-isometries without the wandering subspace property are weighted composition operators which can be realised as weighted shifts, thus providing a partial answer to the problem raised by Shimorin in [\cite{Shimorin}, p. 185]. 
		%		An extensive study on the theory of weighted composition operators has been established during the past few decades on various settings.
	
	%	Let $(X, \Omega,\mu)$ be a $\sigma-$ finite measure space $L^p(\mu)=L^p(X, \Omega,\mu),~1\leq p \leq \infty,$ denote the Banach space of all equivalence classes of  complex valued measurable functions on $(X, \Omega,\mu)$ whose $p$'th power is integrable (resp. are essentially bounded if $p=\infty$). Further, $B(L^p(\mu))~(1\leq p\leq\infty)$ denotes the Banach algebra of all bounded linear operators on $L^p(\mu).$ \\ 
%	In \cite{Azarpanah}, F. Azarpanah, D. Esmaeilvandi and A.R. Salehi characterized the regular elements in $C(X).$\\
 
	The current article is aimed at finding weighted composition operators on the spaces $\ell^p$ and $L^p(\mu)$ as zero divisors in $\mathcal{B}(\ell^p)$ and  $\mathcal{B}(L^p(\mu))$ respectively. Further, we find TDZ in the Banach algebras $C(X)$ in its full generality. Consequently, we determine condition under which the multiplication operator $M_h$  is a TDZ in $\mathcal{B}(C(X)).$  We show that every element in $C(X)$ and in $L^\infty(\mu)$ is a polynomially TDZ.  We also prove that every multiplication operator $M_h$ is a polynomially TDZ in $\mathcal{B}(C(X))$ and  $\mathcal{B}(L^p(\mu))$ respectively. Finally, we show that  if $H$ is a separable Hilbert space, then each $T\in \mathcal{B}(H)$ is a strongly TDZ. Lastly, we construct a class of compact operators in $\mathcal{B}(\ell^p)$ which are also TDZ.  The paper is arranged as follows.

	In section \ref{sec2}, we state some definitions and basic results which are relevant. Section \ref{sec3.1}  is devoted for the characterization of weighted composition operators on the $\ell^p$ spaces as zero divisors in $\mathcal{B}(\ell^p)$ and in section \ref{sec3.2}, we characterize composition operators on the $L^p(\mu)$ spaces as zero divisors in $\mathcal{B}(L^p(\mu)).$ In section \ref{sec3.3}, we completely characterize the TDZ in the Banach algebras $C(X)$ in its full generality. Section \ref{sec3.4} is devoted for the characterization of polynomially TDZ. In this section we show that every element in $C(X)$ and $L^\infty(\mu)$ is a polynomially TDZ. Further, we prove that every multiplication operator $M_h$ is a polynomially TDZ in $\mathcal{B}(C(X))$ and  $\mathcal{B}(L^p(\mu))$ respectively.
	In section \ref{sec3.5}, we show that every TDZ is also a strongly TDZ. We also show that if $H$ is a separable Hilbert space, then each $T\in \mathcal{B}(H)$ is a strongly TDZ. Lastly, we construct a class of compact operators in $\mathcal{B}(\ell^p)$ which are also TDZ.
	
	\section{Preliminaries} \label{sec2}
	In this section, we state few definitions and some known results, which we shall use later.		
	\begin{definition}\rm{\cite{RG Douglas,Simmons}}\label{anurag1}
		In a Banach algebra $\mathfrak{B}$ with identity $1,$ an element $x$ is called regular if $ xy=yx=1$ for some  $y\in \mathfrak{B},$ otherwise it is called a singular element. 
	\end{definition}

	\begin{definition}\rm({\cite{Zimmer}})\label{anurag5}
		Let $(X,\Omega,\mu)$ be a measure space, $Y$ be a topological space and $f:X\to Y$ . An element $y\in Y$ is in the essential range of $f$ if $\mu(f^{-1}(U))>0$ for each neighborhood $U$ of $y.$ We denote the essential range of $f$ by ess.range$(f).$  
	\end{definition}

	\begin{theorem}\rm {\cite{RK,RKM}}\label{anurag8}
		Let $\phi:\mathbb{N} \to \mathbb{N}$ and $1\leq p<\infty.$ Let $C_{\phi}$ be the composition operator on $\ell^p.$ Then the following statements are true \begin{enumerate}
			\item [$(1)$] $C_{\phi}$ is injective $\iff$ $\phi$ is surjective;
			\item [$(2)$] $C_{\phi}$ is surjective $\iff$ $\phi$ is injective;
			\item [$(3)$] $C_{\phi}$ is invertible $\iff$ $\phi$ is invertible.
		\end{enumerate} 
	\end{theorem}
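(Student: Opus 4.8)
The plan is to first collapse the series definition to the pointwise formula $C_\phi(f)(m) = f(\phi(m))$ for every $m \in \mathbb{N}$: for a fixed $m$ the quantity $\chi_{\phi^{-1}(n)}(m)$ is nonzero for exactly one index, namely $n = \phi(m)$, so the defining sum reduces to the single term $f(\phi(m))$. Thus $C_\phi f = f \circ \phi$, and each of the three equivalences becomes a combinatorial statement about the self-map $\phi$, which I would test against the standard unit vectors $e_k = \chi_{\{k\}}$ of $\ell^p$.

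For $(1)$ I would work through the kernel of $C_\phi$. If $\phi$ is surjective, then $C_\phi f = 0$ forces $f(\phi(m)) = 0$ for all $m$, so $f$ vanishes on $\mathcal{R}(\phi) = \mathbb{N}$, giving $f = 0$ and hence injectivity of $C_\phi$. Conversely, if $\phi$ is not surjective, I pick $k \notin \mathcal{R}(\phi)$ and observe that $C_\phi e_k = 0$ while $e_k \neq 0$, so $C_\phi$ is not injective; the contrapositive delivers the remaining implication.

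For $(2)$ the substantive direction is that injectivity of $\phi$ implies surjectivity of $C_\phi$. Given $g \in \ell^p$, I would define $f$ on $\mathcal{R}(\phi)$ by $f(\phi(m)) = g(m)$, which is unambiguous precisely because $\phi$ is injective, and set $f = 0$ off $\mathcal{R}(\phi)$. The crucial step is to verify $f \in \ell^p$: since $m \mapsto \phi(m)$ is a bijection of $\mathbb{N}$ onto $\mathcal{R}(\phi)$, one obtains $\|f\|_p^p = \sum_{n \in \mathcal{R}(\phi)} |f(n)|^p = \sum_m |g(m)|^p = \|g\|_p^p$, and then $C_\phi f = g$. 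For the converse, if $\phi(m_1) = \phi(m_2)$ for some $m_1 \neq m_2$, then every $g$ in the range of $C_\phi$ satisfies $g(m_1) = g(m_2)$, so $e_{m_1}$ is not attained and $C_\phi$ is not surjective.

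Finally, $(3)$ follows by combining $(1)$ and $(2)$: a bounded operator is invertible exactly when it is both injective and surjective, the boundedness of the inverse being automatic from the bounded inverse theorem. By $(1)$ and $(2)$ this happens iff $\phi$ is both surjective and injective, that is, iff $\phi$ is a bijection, i.e. invertible. The main obstacle I anticipate is the $\ell^p$-membership check in $(2)$: one must use injectivity of $\phi$ to guarantee no mass is lost or duplicated when transporting $g$ back along $\phi$, and one should also confirm at the outset that $C_\phi$ is genuinely bounded in the cases at hand — immediate from the Singh--Komal criterion, since an injective $\phi$ gives $|\phi^{-1}(n)| \leq 1$.
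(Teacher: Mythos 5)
Your proposal is correct and complete. Note that the paper itself gives no proof of this statement --- it is quoted as a known result from Singh--Komal and Singh--Manhas --- and your argument (reducing the series definition to $C_\phi f = f\circ\phi$, testing against the unit vectors $e_k=\mychi_{\{k\}}$ for the two negative directions, the injectivity-of-$\phi$-based transport $f(\phi(m))=g(m)$ with the norm-preservation check $\|f\|_p=\|g\|_p$ for surjectivity, and the bounded inverse theorem for $(3)$) is essentially the standard proof found in those references, correctly handling the only delicate points, namely well-definedness of the transported function and its membership in $\ell^p$.
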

	\begin{theorem}\rm {\cite{RK,RKM}}\label{anurag17}
		Suppose $u\in \ell^\infty$ with $u(n)\neq0~\forall~n\in \mathbb{N}.$ Let $\phi:\mathbb{N} \to \mathbb{N}$ and $1\leq p<\infty.$ Then the weighted composition operator $uC_{\phi}$ on $\ell^p$ is injective if and only if $\phi$ is surjective .		 
	\end{theorem}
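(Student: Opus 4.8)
The plan is to use the hypothesis that $u$ is nowhere zero to show that the weight plays no role in the kernel of $uC_\phi$, so that injectivity is governed entirely by $\phi$, exactly as in the unweighted case of Theorem~\ref{anurag8}(1).

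First I would write down the pointwise action: for $f\in\ell^p$ and $m\in\mathbb{N}$,
\[
(uC_\phi f)(m)=u(m)\,f(\phi(m)).
\]
Because $u(m)\neq 0$ for every $m$, this vanishes for all $m$ if and only if $f(\phi(m))=0$ for all $m$; equivalently, $uC_\phi f=0$ if and only if $f$ vanishes on the range $\phi(\mathbb{N})$. Thus $\ker(uC_\phi)=\ker(C_\phi)$, and it suffices to decide when this common kernel is trivial.

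For the implication ``$\phi$ surjective $\Rightarrow$ $uC_\phi$ injective'', I would take $f$ with $uC_\phi f=0$, so that $f\equiv 0$ on $\phi(\mathbb{N})$; surjectivity gives $\phi(\mathbb{N})=\mathbb{N}$, whence $f\equiv 0$. For the converse I argue contrapositively: if $\phi$ is not surjective, choose $k\in\mathbb{N}\setminus\phi(\mathbb{N})$ and test the operator on the standard unit vector $e_k=\chi_{\{k\}}\in\ell^p$. Then $e_k(\phi(m))=0$ for all $m$ since $\phi(m)\neq k$, so $uC_\phi e_k=0$ while $e_k\neq 0$, and $uC_\phi$ fails to be injective. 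Alternatively, one may simply note that $M_u$ is injective since $u$ is nowhere zero, so $uC_\phi=M_uC_\phi$ is injective if and only if $C_\phi$ is, and then quote Theorem~\ref{anurag8}(1) directly.

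There is no substantial obstacle here: the whole point is the elementary observation that multiplication by a nowhere-zero weight is injective and therefore cannot alter the kernel. The only minor things to verify are that the test vectors $e_k$ genuinely lie in $\ell^p$ for every $p\in[1,\infty)$ (they do) and that boundedness or well-definedness of $uC_\phi$ is not required for the kernel computation, since the argument is purely algebraic and remains valid on the whole domain of $uC_\phi$.
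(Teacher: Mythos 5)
Your proof is correct. Note that the paper itself gives no argument for this statement: it is recorded in the Preliminaries as a known result, quoted from the cited works of Singh--Komal and Singh--Manhas, so there is no internal proof to compare against; your kernel computation is exactly the standard argument one finds in those sources. The main line of your proposal is airtight: since $u(m)\neq 0$ for all $m$, the pointwise identity $(uC_\phi f)(m)=u(m)f(\phi(m))$ shows $uC_\phi f=0$ iff $f$ vanishes on $\phi(\mathbb{N})$, and testing with $e_k=\mychi_{\{k\}}$ for $k\notin\phi(\mathbb{N})$ settles the converse; you are also right that this is purely algebraic and needs no boundedness hypothesis. One small caution about your alternative route via the factorization $uC_\phi=M_uC_\phi$ and Theorem~\ref{anurag8}(1): this implicitly treats $C_\phi$ as an operator from $\ell^p$ into $\ell^p$, which can fail when $\{\vert\phi^{-1}(n)\vert\}$ is unbounded even though $uC_\phi$ itself is well defined (a decaying $u$ can compensate), so the factorization argument is only valid under an extra hypothesis; your primary pointwise argument avoids this entirely and should be preferred.
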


	%\begin{theorem}\rm {\cite{RK,RKM}}\label{anurag17}
	%	Suppose $u\in \ell^\infty$ is such that $u(n)\neq0~\forall~n\in \mathbb{N}.$ Let $\phi:\mathbb{N}\to \mathbb{N}$ be the self map and $1\leq p<\infty.$ Let $uC_{\phi}:\ell^p\to \ell^p$ be the weighted composition operator induced by $u \text{ and } \phi$ and defined as $(uC_{\phi})(f)(n)=u(n)f(\phi(n))~\forall~n\in \mathbb{N} ~\text{ and }~f\in \ell^p.$ Then, $uC_{\phi}$ is injective if and only if $\phi$ is surjective .		 
	%\end{theorem}
	\begin{definition}\rm{\cite{HCDilip}}
		Let $X$ be a non-empty set. We say that $u:X \to \mathbb{C}$ is bounded away from zero if there exists $a>0$ satisfying $0<\frac{1}{|u(x)|}\leq a~\forall~x\in X.$
	\end{definition}
	\begin{theorem}\rm {\cite{RK,RKM}}\label{anurag31}
		Suppose $u\in \ell^\infty$ is bounded away from zero. Let $\phi:\mathbb{N} \to \mathbb{N}$ and $1\leq p<\infty.$ Let $uC_{\phi}:\ell^p \to \ell^p$ be the weighted composition operator. Then $uC_{\phi}$ is surjective if and only if $\phi$ is injective.		 
	\end{theorem}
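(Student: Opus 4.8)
The plan is to reduce the problem to the unweighted case already recorded in Theorem \ref{anurag8} by factoring the weighted composition operator through an invertible multiplication operator. First I would observe that, for $f\in\ell^p$, we have $uC_\phi f(n) = u(n) f(\phi(n)) = u(n)(C_\phi f)(n) = (M_u C_\phi f)(n)$, so that $uC_\phi = M_u C_\phi$ as operators on $\ell^p$.

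Next I would establish that $M_u$ is invertible in $\mathcal{B}(\ell^p)$. Since $u\in\ell^\infty$, the operator $M_u$ is bounded with $\|M_u\|\leq\|u\|_\infty$. Because $u$ is bounded away from zero, there is $a>0$ with $1/|u(n)|\leq a$ for all $n$, hence $1/u\in\ell^\infty$ and $M_{1/u}$ is bounded; a pointwise check gives $M_u M_{1/u}=M_{1/u}M_u = I$, so $M_u^{-1}=M_{1/u}$. In particular, writing $C_\phi = M_{1/u}(uC_\phi)$ shows that $C_\phi$ is bounded whenever $uC_\phi$ is, so $C_\phi$ is a genuine composition operator on $\ell^p$ and Theorem \ref{anurag8} legitimately applies.

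With this factorization, surjectivity transfers cleanly. Since $M_u$ is a bijection of $\ell^p$, the range of $uC_\phi = M_u C_\phi$ is the image under $M_u$ of the range of $C_\phi$, which equals all of $\ell^p$ precisely when the range of $C_\phi$ is all of $\ell^p$. Hence $uC_\phi$ is surjective if and only if $C_\phi$ is surjective, and invoking part $(2)$ of Theorem \ref{anurag8} gives that $C_\phi$ is surjective if and only if $\phi$ is injective, which completes the equivalence.

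I do not expect a serious obstacle here; the entire content sits in the two preparatory facts. The one point deserving care is the bookkeeping around the factorization: verifying that $C_\phi$ inherits boundedness from $uC_\phi$ so that Theorem \ref{anurag8} may be cited, and confirming that composing with the bijection $M_u$ neither enlarges nor shrinks the range. Should a self-contained argument be preferred to citing Theorem \ref{anurag8}, one could argue directly: for sufficiency, if $\phi$ is injective then given $g\in\ell^p$ one defines $f$ on $\mathcal{R}(\phi)$ by $f(\phi(n))=g(n)/u(n)$ and $f=0$ off $\mathcal{R}(\phi)$, noting $f\in\ell^p$ via $\|f\|_p\leq a\|g\|_p$; for necessity, if $\phi(n_1)=\phi(n_2)$ with $n_1\neq n_2$ then every $g$ in the range satisfies $u(n_2)g(n_1)=u(n_1)g(n_2)$, so the range is contained in a proper closed subspace and $uC_\phi$ fails to be surjective.
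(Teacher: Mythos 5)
Your proposal is correct, but note that the paper offers no proof to compare against: Theorem \ref{anurag31} is stated in the Preliminaries and cited from \cite{RK,RKM}, so any proof here is necessarily ``different from the paper's.'' Your main route is the standard one and is sound: the factorization $uC_\phi = M_u C_\phi$, the invertibility of $M_u$ with $M_u^{-1}=M_{1/u}$ (which uses exactly the hypothesis that $u$ is bounded away from zero, giving $1/u\in\ell^\infty$), the observation that $C_\phi = M_{1/u}(uC_\phi)$ inherits boundedness so that Theorem \ref{anurag8} is legitimately applicable, and the identity $\mathcal{R}(M_u C_\phi)=M_u(\mathcal{R}(C_\phi))$ with $M_u$ bijective, which transfers surjectivity faithfully. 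Your self-contained alternative also checks out: for sufficiency, injectivity of $\phi$ makes $f(\phi(n))=g(n)/u(n)$, $f=0$ off $\mathcal{R}(\phi)$, well defined with $\|f\|_p\leq a\|g\|_p$ and $uC_\phi f=g$; for necessity, if $\phi(n_1)=\phi(n_2)$ with $n_1\neq n_2$, every $g$ in the range satisfies the bounded linear constraint $u(n_2)g(n_1)=u(n_1)g(n_2)$, which $\chi_{\{n_1\}}$ violates since $u(n_2)\neq 0$, so the range lies in a proper closed hyperplane. It is worth noting that your range-transfer device is the same move the paper itself makes inside Theorem \ref{Anurag31}, where it shows $uf_0\notin\mathcal{R}(uC_\phi)$ when $f_0\notin\mathcal{R}(C_\phi)$ and $u$ is nonvanishing; your direct argument additionally shows that for the necessity direction the weaker hypothesis $u(n)\neq 0$ for all $n$ already suffices, with bounded-away-from-zero needed only for sufficiency.
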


	%\begin{theorem}\rm {\cite{RK,RKM}}\label{anurag31}
	%	Suppose $u\in \ell^\infty$ is bounded away from zero. Let $\phi:\mathbb{N}\to \mathbb{N}$ be the self map and $1\leq p<\infty.$ Let $uC_{\phi}:\ell^p\to \ell^p$ be the weighted composition operator induced by $u \text{ and } \phi$ and defined as $(uC_{\phi})(f)(n)=u(n)f(\phi(n))~\forall~n\in \mathbb{N} ~\text{ and }~f\in \ell^p.$ Then, $uC_{\phi}$ is surjective if and only if $\phi$ is injective.		 
	%\end{theorem}
	%	\begin{theorem}
		%		Suppose $u\in \ell^\infty$ is such that $u(n)\neq0~\forall~n\in \mathbb{N}.$ Then the weighted composition operator $uC_{\phi}$ on $\ell^p$ is injective if and only if $\phi$ is surjective.
		%	\end{theorem}
	\begin{definition}\rm{\cite{R.B.Holmes}}
		Let $H$ be an infinite dimensional Hilbert space. An operator $T \in \mathcal{B}(H)$  is called antinormal if $d(T, \mathcal N)=\inf_{N\in \mathcal N}\|T-N\|=\|T\|,$ where $\mathcal N$ is the class of all normal operators in $\mathcal{B}(H).$
	\end{definition}
	%\begin{theorem}\rm {\cite{HCDilip}}
	%	Suppose $u(n)$ is non-zero for each $n\in \mathbb{N}$ and $\phi$ is injective but not surjective. Then $uC_\phi$ is antinormal on $\ell^2$ if and only if the following conditions hold:\\
	%	(a) For each $0\leq \alpha<\overline{u(n)}u(\phi(n))\neq \alpha$ except for finitely many $n\in \mathbb{N},$\\
	%	(b) $\|uC_\phi\|^2=\overline{u(n)}u(\phi(n))$ for finitely many $n\in \mathbb{N}.$
	%\end{theorem}
	\begin{theorem}\rm{\cite{Dillip}} \label{DAntinormal}
		Suppose $u\in \ell^\infty$ is bounded away from zero and let $\phi$ be a bijective self map on $\mathbb{N}.$ Then $uC_{\phi}$ is not antinormal on $\ell^2$.
	\end{theorem}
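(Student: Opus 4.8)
The plan is to prove the negative statement directly: I will exhibit a single normal operator that lies strictly closer to $uC_\phi$ than the number $\|uC_\phi\|$, which by definition forbids antinormality. The first step is structural. Writing $uC_\phi = M_u C_\phi$ and observing that $\phi$ is a bijection of $\mathbb{N}$, the operator $C_\phi$ merely permutes the standard orthonormal basis $\{e_n\}$ of $\ell^2$; hence $C_\phi$ is unitary (this is the $p=2$ instance of Theorem~\ref{anurag8}$(3)$, sharpened from invertible to unitary by the isometry computation $\|C_\phi f\|_2 = \|f\|_2$). Consequently $\|uC_\phi\| = \|M_u C_\phi\| = \|M_u\| = \|u\|_\infty$, and the whole problem reduces to producing a normal $N \in \mathcal{B}(\ell^2)$ with $\|uC_\phi - N\| < \|u\|_\infty$.

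The natural first guess, $N = \lambda C_\phi$ with $\lambda \in \mathbb{C}$, is normal (a scalar times a unitary) and gives $\|uC_\phi - \lambda C_\phi\| = \|u - \lambda\|_\infty$; but $\inf_\lambda \|u-\lambda\|_\infty$ is the Chebyshev radius of $\overline{\mathcal{R}(u)}$, and this can equal $\|u\|_\infty$ when the range of $u$ encircles the origin, so a scalar shift alone is not enough. The fix is to correct the phase as well: set $w = u/|u|$, which is well defined and unimodular everywhere precisely because $u$ is bounded away from zero, and take $N = c\,M_w C_\phi$ for a suitable real $c>0$. Since $M_w$ is a diagonal unitary and $C_\phi$ is unitary, $N$ is a positive scalar multiple of a unitary operator, hence normal.

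For the estimate, unitarity of $C_\phi$ again lets us strip it off: $\|uC_\phi - N\| = \|M_u - c\,M_w\| = \sup_n |\,|u(n)| - c\,|$, since the diagonal entries $u(n) - c\,u(n)/|u(n)|$ have moduli $|\,|u(n)| - c\,|$. Writing $m = \inf_n |u(n)|$ and $M = \sup_n |u(n)| = \|u\|_\infty$ and choosing $c = (m+M)/2$, the supremum equals $(M-m)/2$. Because $u$ is bounded away from zero we have $m \ge 1/a > 0$, so $(M-m)/2 \le M/2 < M = \|u\|_\infty = \|uC_\phi\|$. Therefore $d(uC_\phi,\mathcal{N}) \le \|uC_\phi - N\| < \|uC_\phi\|$, and $uC_\phi$ is not antinormal.

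The only genuine obstacle is guessing the correct comparison operator; once one recognizes that $C_\phi$ is unitary, the operator-distance problem collapses to the one-dimensional task of approximating the bounded modulus function $|u|$ by a single constant, and the phase factor $M_w$ is exactly what converts the operator norm into the supremum norm $\sup_n |\,|u(n)|-c\,|$. I expect the two points requiring the most care to be the verification that $N$ is honestly normal and the justification that the unitary $C_\phi$ may be factored out of the norm. One may alternatively remark that $M_v C_\phi$ is normal exactly when $|v|$ is constant along the orbits of $\phi$, but the simpler global constant-modulus choice above already suffices, so the orbit analysis can be avoided entirely.
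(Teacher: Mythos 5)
Your proof is correct, and it is worth noting at the outset that the paper you are comparing against contains no proof of this statement at all: Theorem~\ref{DAntinormal} is quoted in the Preliminaries from the reference \cite{Dillip}, where the result is obtained through the general machinery for antinormality of weighted composition operators (characterizations of the distance to the normals via essential spectra and related invariants). Your argument is genuinely different and considerably more elementary: you simply exhibit one explicit normal operator $N = c\,M_w C_\phi$, with $w = u/|u|$ and $c = (m+M)/2$, at distance strictly less than $\|uC_\phi\|$ from $uC_\phi$, which negates antinormality directly from the definition $d(T,\mathcal{N}) = \|T\|$. All the load-bearing steps check out: bijectivity of $\phi$ makes $C_\phi$ a permutation of the standard basis, hence unitary, which justifies both $\|uC_\phi\| = \|M_u\| = \|u\|_\infty$ and the factoring $\|uC_\phi - N\| = \|M_{u-cw}\|$; the bounded-away-from-zero hypothesis (so $m \ge 1/a > 0$ in the paper's Definition) enters exactly twice, once to make $w$ well defined and unimodular, and once to force the strict inequality $(M-m)/2 \le M/2 < M$. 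Both hypotheses of the theorem are therefore used, and used where they must be. One microscopic inaccuracy: since $m$ and $M$ are an infimum and a supremum that need not be attained, $\sup_n \bigl|\,|u(n)| - c\,\bigr|$ need only satisfy $\le (M-m)/2$ rather than equal it, but the inequality is all your argument requires. Your closing parenthetical about $M_vC_\phi$ being normal iff $|v|$ is constant on the orbits of $\phi$ is also correct (it follows from $C_\phi^{*}M_{|v|^2}C_\phi = M_{|v|^2\circ\phi^{-1}}$), and you are right that your constant-modulus choice sidesteps it. What your approach buys is a short, self-contained, quantitative proof giving the explicit bound $d(uC_\phi,\mathcal{N}) \le (M-m)/2$; what the cited reference's approach buys is applicability beyond the invertible case, where no such direct competitor operator is available.
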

	
	%	\begin{lemma}\rm  {\cite{Sheldom,RG Douglas,Zimmer}}\label{anurag6}
		%		Suppose $(X,\Omega,\mu)$ is a measure space and $h\in L^\infty(\mu).$ For $1\leq p\leq\infty,$ define the multiplication operator  $M_h:L^p(\mu)\to L^p(\mu)$ by $M_h(f)(x)=h(x) f(x)~\forall~ x\in X$. Then $M_h\in B(L^p(\mu)).$	
		%	\end{lemma}
	
	\begin{theorem}\rm{\cite{AnuragHarish2}} \label{ess.range(f)}
		Let	$h \in L^\infty(\mu).$ Then $h$ is a TDZ if and only if $0\in \textrm{ess.range}(h).$ 
	\end{theorem}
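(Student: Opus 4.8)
The plan is to prove both implications separately, with the whole characterization resting on the equivalence between the condition $0 \notin \mathrm{ess.range}(h)$ and the invertibility of $h$ in the commutative Banach algebra $L^\infty(\mu)$. Since $L^\infty(\mu)$ is commutative, the left and right TDZ conditions coincide, so there is no asymmetry to treat and it suffices to exhibit or rule out a single unit-norm sequence $\{z_n\}$ with $h z_n \to 0$.

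For the forward direction I would argue by contraposition. Suppose $0 \notin \mathrm{ess.range}(h)$. By Definition \ref{anurag5}, there is a neighborhood of $0$ whose preimage is null, i.e. some $\epsilon > 0$ with $\mu(\{x : |h(x)| < \epsilon\}) = 0$, so that $|h| \geq \epsilon$ almost everywhere. Consequently $1/h$ is well-defined and essentially bounded, which makes $h$ invertible in $L^\infty(\mu)$ with $h^{-1} = 1/h$. An invertible element can never be a TDZ: if $\{z_n\}$ satisfied $\|z_n\|_\infty = 1$ and $h z_n \to 0$, then $z_n = h^{-1}(h z_n) \to 0$, contradicting $\|z_n\|_\infty = 1$. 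Hence $h$ is not a TDZ.

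For the reverse direction I would construct the witnessing sequence explicitly. Assuming $0 \in \mathrm{ess.range}(h)$, for each $n \in \mathbb{N}$ the set $E_n = \{x : |h(x)| < 1/n\}$ has $\mu(E_n) > 0$, directly from the essential-range hypothesis. Set $z_n = \chi_{E_n}$. Because $\mu(E_n) > 0$ we have $\|z_n\|_\infty = 1$, and since $|h| < 1/n$ on $E_n$ while $z_n$ vanishes off $E_n$, we obtain $\|h z_n\|_\infty \leq 1/n \to 0$. Thus $\{z_n\}$ demonstrates that $h$ is a TDZ.

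The argument is largely routine, and I do not anticipate a serious obstacle; the only points requiring care are verifying that $h$ bounded away from zero genuinely yields an inverse in $L^\infty(\mu)$ (rather than merely a pointwise reciprocal) and that the characteristic functions carry unit $L^\infty$-norm, which is exactly what the strict positivity $\mu(E_n) > 0$ guarantees. Note that $\sigma$-finiteness of $\mu$ is not needed for this particular construction, since the $L^\infty$-norm of $\chi_{E_n}$ equals $1$ as soon as $\mu(E_n) > 0$, regardless of whether $\mu(E_n)$ is finite.
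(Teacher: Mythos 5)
Your argument is correct in both directions: the contrapositive step ($|h|\geq\epsilon$ a.e.\ forces $1/h\in L^\infty(\mu)$, and an invertible element cannot be a TDZ since $z_n=h^{-1}(hz_n)\to 0$ would contradict $\|z_n\|_\infty=1$) and the explicit witness $z_n=\mychi_{E_n}$ with $E_n=\{x:|h(x)|<1/n\}$, $\mu(E_n)>0$, $\|z_n\|_\infty=1$, $\|hz_n\|_\infty\leq 1/n$, are both sound, and your observation that commutativity of $L^\infty(\mu)$ collapses the left/right distinction is apt. One caveat on the comparison: the present paper states Theorem \ref{ess.range(f)} without proof, citing it from the companion paper \cite{AnuragHarish2}, so there is no in-paper proof to measure against; your argument is, however, the standard one, and it mirrors exactly the technique the paper does use for the analogous result in $C(X)$ (Theorem \ref{CHS TDZ}), where Urysohn's Lemma supplies the unit-norm functions supported near the zero set in place of your characteristic functions $\mychi_{E_n}$.
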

	\begin{theorem}\rm {\cite{AnuragHarish2}}\label{Anurag10}
		Let $h \in L^\infty(\mu)$ and $1\leq p\leq\infty.$ Then the multiplication operator $M_h$ is a TDZ in $B(L^p(\mu))$ if and only if $h$ is a TDZ in $L^\infty(\mu).$ 
	\end{theorem}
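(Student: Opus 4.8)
The plan is to lean on Theorem \ref{ess.range(f)}, which identifies the TDZ elements of $L^\infty(\mu)$ as exactly those $h$ with $0 \in \mathrm{ess.range}(h)$. So it suffices to prove that $M_h$ is a TDZ in $\mathcal{B}(L^p(\mu))$ if and only if $0 \in \mathrm{ess.range}(h)$, and I would split this into the two implications below. Throughout I use that $h \in L^\infty(\mu)$ makes $M_h$ a bounded operator with $\|M_h\| = \|h\|_\infty$.

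For the forward implication I would argue by contraposition. If $0 \notin \mathrm{ess.range}(h)$, then there is $\delta > 0$ with $|h| \ge \delta$ almost everywhere, so $1/h \in L^\infty(\mu)$ and $M_{1/h}$ is a two-sided inverse of $M_h$ in $\mathcal{B}(L^p(\mu))$. The key observation is that an invertible element of a unital Banach algebra can never be a TDZ: if $M_h T_n \to 0$ with $\|T_n\| = 1$, then $T_n = M_{1/h} M_h T_n \to 0$, a contradiction, and symmetrically if $T_n M_h \to 0$. Hence $M_h$ is not a TDZ, which is precisely the contrapositive of the forward direction.

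For the converse, suppose $0 \in \mathrm{ess.range}(h)$. Then for each $n$ the set $\{x : |h(x)| < 1/n\}$ has positive measure, and by $\sigma$-finiteness I can extract a measurable subset $E_n$ with $0 < \mu(E_n) < \infty$ on which $|h| < 1/n$. I would then exhibit a norm-one sequence witnessing the TDZ property: fix a unit functional $\psi \in (L^p(\mu))^*$ (which exists by Hahn--Banach), set $f_n = \chi_{E_n}/\|\chi_{E_n}\|_p$ for $1 \le p < \infty$ (and $f_n = \chi_{E_n}$ when $p = \infty$), and define the rank-one operators $T_n g = \psi(g)\,f_n$. Each satisfies $\|T_n\| = \|\psi\|\,\|f_n\|_p = 1$, while the estimate $|h f_n| \le \tfrac1n |f_n|$ pointwise (because $f_n$ is supported on $E_n$) gives $\|M_h T_n\| = \|h f_n\|_p \le \tfrac1n \|f_n\|_p = \tfrac1n \to 0$. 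This shows $M_h T_n \to 0$ with $\|T_n\| = 1$, so $M_h$ is a TDZ.

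The boundedness and invertibility bookkeeping in the forward direction is routine; I expect the genuine work to lie in the converse. The main points requiring care are the measure-theoretic step of extracting finite positive-measure subsets $E_n$ from $\sigma$-finiteness, the uniform handling of the endpoint $p = \infty$ alongside $1 \le p < \infty$, and the verification that the constructed $T_n$ have norm exactly one while $\|h f_n\|_p \to 0$. These constitute the heart of the argument.
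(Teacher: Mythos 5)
Your proof is correct. One contextual point: the paper states this theorem without proof, citing \cite{AnuragHarish2} (a communicated paper), so there is no in-paper argument to compare line by line; the closest model inside the paper is Theorem \ref{M_h Compact}, the $C(X)$ analogue. That argument (and the natural $L^\infty(\mu)$ version of it) transfers a witnessing sequence directly: given $h_n\in L^\infty(\mu)$ with $\|h_n\|_\infty=1$ and $\|hh_n\|_\infty\to 0$, one takes $T_n=M_{h_n}$, using the identity $\|M_u\|=\|u\|_\infty$ for $\sigma$-finite $\mu$, so that $\|M_hM_{h_n}\|=\|hh_n\|_\infty\to 0$; for the converse, a non-TDZ $h$ is regular, hence $M_h$ is invertible and cannot be a TDZ. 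You instead route both implications through Theorem \ref{ess.range(f)}: your forward direction is the same invertibility argument (if $0\notin \mathrm{ess.range}(h)$ then $|h|\geq\delta$ a.e.\ and $M_{1/h}$ is a two-sided inverse, and an invertible element of a unital Banach algebra is never a TDZ), but for the converse you build rank-one witnesses $T_ng=\psi(g)f_n$ with $f_n$ a normalized indicator of a set $E_n\subset\{|h|<1/n\}$ of finite positive measure. Both routes are sound; yours has the advantage of being self-contained at the operator level --- you need only the exact rank-one norm $\|T_n\|=\|\psi\|\,\|f_n\|_p$ and the trivial pointwise bound, rather than the norm identity $\|M_u\|=\|u\|_\infty$ --- and it handles $p=\infty$ uniformly alongside $1\leq p<\infty$, while the multiplication-operator route is shorter and produces commuting witnesses, giving the left and right TDZ property simultaneously. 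Your measure-theoretic bookkeeping also checks out: $0\notin\mathrm{ess.range}(h)$ does yield $|h|\geq\delta$ a.e.; $\sigma$-finiteness does produce $E_n$ with $0<\mu(E_n)<\infty$ from $\mu(\{|h|<1/n\})>0$; and Hahn--Banach furnishes a norm-one $\psi$ because $L^p(\mu)\neq\{0\}$, which is guaranteed by $\mu(E_n)>0$.
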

	
	%	\begin{lemma}\rm {\cite{Anurag}}\label{anurag11}
		%		Let $\mathfrak{B}$ be a commutative Banach algebra and $x\in \mathfrak{B}$ be a TDZ. Then for each $y\in \mathfrak{B},\ xy$ is also a TDZ.
		%	\end{lemma}
	\begin{theorem}\rm {\cite{Anurag}}\label{anurag20}
		Let $f\in C[a,b].$ Then $f$ is a TDZ if and only if $Z(f)\ne \emptyset.$ 
	\end{theorem}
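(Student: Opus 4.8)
The plan is to prove the two implications separately, exploiting that $C[a,b]$ is a \emph{commutative} Banach algebra under the sup norm, so that the left and right TDZ conditions coincide: $f$ is a TDZ precisely when there is a sequence $\{z_n\}\subset C[a,b]$ with $\|z_n\|_\infty=1$ for all $n$ and $\|fz_n\|_\infty\to 0$.

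For the forward direction ($f$ a TDZ $\Rightarrow Z(f)\neq\emptyset$) I would argue by contraposition. Suppose $Z(f)=\emptyset$. Then $|f|$ is a strictly positive continuous function on the compact set $[a,b]$, so $m:=\min_{x\in[a,b]}|f(x)|>0$, and $f$ is invertible in $C[a,b]$ with inverse $1/f$. An invertible element of a Banach algebra can never be a TDZ: if $\|z_n\|_\infty=1$ and $fz_n\to 0$, then $z_n=f^{-1}(fz_n)\to 0$, contradicting $\|z_n\|_\infty=1$. More concretely, one has the pointwise bound $\|fz_n\|_\infty=\sup_x|f(x)||z_n(x)|\geq m\|z_n\|_\infty=m>0$ for every normalized sequence, so $fz_n\not\to 0$. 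Hence $f$ is not a TDZ.

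For the converse, suppose $Z(f)\neq\emptyset$ and fix $x_0\in[a,b]$ with $f(x_0)=0$. I would construct an explicit sequence of normalized ``tent'' functions concentrating at $x_0$. Choosing $\delta_n\downarrow 0$, let $z_n$ be the continuous function equal to $1$ at $x_0$, decaying linearly to $0$ on $[x_0-\delta_n,x_0+\delta_n]\cap[a,b]$, and vanishing elsewhere (a one-sided tent when $x_0$ is an endpoint). Then $\|z_n\|_\infty=1$ and $z_n$ is supported in $[x_0-\delta_n,x_0+\delta_n]$, so $\|fz_n\|_\infty\leq\sup_{|x-x_0|\leq\delta_n}|f(x)|$. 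By continuity of $f$ at $x_0$ together with $f(x_0)=0$, the right-hand side tends to $0$, giving $\|fz_n\|_\infty\to 0$ and showing that $f$ is a TDZ.

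Neither direction presents a serious obstacle; the only point requiring care is the bookkeeping in the tent-function construction, namely ensuring continuity, the normalization $\|z_n\|_\infty=1$, and the endpoint cases, together with using the continuity of $f$ at $x_0$ so that the supremum over the shrinking interval genuinely vanishes. The conceptual content is simply the equivalence ``$f$ has no zeros $\Leftrightarrow$ $f$ is invertible'' on a compact domain, combined with the standard fact that invertible elements are never topological divisors of zero.
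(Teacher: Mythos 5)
Your proof is correct and takes essentially the same route as the paper: Theorem \ref{anurag20} is quoted from \cite{Anurag}, and the argument the paper gives for its generalization, Theorem \ref{CHS TDZ} on $C(X)$, has exactly your structure --- absence of zeros gives invertibility (hence not a TDZ, via the bound $\|fz_n\|\geq m\|z_n\|$), while a zero at $x_0$ yields normalized bump functions supported where $|f|$ is small, so that $\|ff_n\|\to 0$. Your explicit tent functions are simply the concrete $[a,b]$-realization of the Urysohn functions used there.
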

	%	\begin{theorem}\rm {\cite{Anurag}}\label{anurag10}	
		%		Let $\mathcal{A}=\mathcal{A}(\mathbb{D})$ be the disk algebra. Let $f(z)=\left(\frac{z-z_0}{2}\right)$ for some $z_0 \in \mathbb{C}$. Then $f$ is TDZ in $\mathcal{A}$ if and only if $\vert z_0\vert=1.$
		%	\end{theorem}
	
	%	\begin{theorem}\rm {\cite{Walter}} [Mergelyan's Theorem] \label{Mergelyan}
		%		If $K$ is a compact set in the plane whose complement is connected, if $f$ is continuous complex function on $K,$ and if $\epsilon>0,$ then there exists a polynomial $p$ such that $|f(z)-p(z)|<\epsilon$ for all $z\in K.$
		%	\end{theorem}
	
	%	\begin{theorem}	
		%		Suppose $u\in \ell^\infty$ is bounded away from zero. Then the weighted composition operator $uC_{\phi}$ on $\ell^p$ is surjective if and only if $\phi$ is injective.
		%	\end{theorem}
	
	\section{Main results} \label{sec3}

\subsection{Weighted composition operators on $\ell^p~(1\leq p<\infty)$ spaces as zero divisors} \label{sec3.1}
In this section, we give a characterization of the weighted composition operators on $\ell^p,$ which are zero divisors in $\mathcal{B}(\ell^p).$\\
We begin with the following theorem, which shows that under a certain condition on $\phi$ and the zero set $Z(u)$ of the weight function $u$, the weighted composition operator $uC_\phi$ is a right zero divisor. 
\begin{theorem}\label{hc31}
	Let $u\in \ell^\infty$ and $Z(u)$ denote the zero set of $u.$ If $\phi: \mathbb{N} \to \mathbb{N}$ is such that there exists $n_0 \in \mathbb{N}$ with $\phi^{-1}(n_0)\neq \emptyset$ and $\phi^{-1}(n_0)=\{n_1, n_2, ..., n_k\} \subset Z(u)$ then $uC_\phi$ is right zero divisor.
\end{theorem}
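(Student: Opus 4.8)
The plan is to use the hypothesis to put the basis vector supported at $n_0$ into the kernel of $uC_\phi$ and then multiply by a rank-one operator to produce a vanishing product. Write $e_m$ for the unit vector $\chi_{\{m\}}\in\ell^p$. The key step is the identity $uC_\phi e_{n_0}=0$: for each $m\in\mathbb{N}$ we have $(uC_\phi e_{n_0})(m)=u(m)\,e_{n_0}(\phi(m))$, and the factor $e_{n_0}(\phi(m))$ is nonzero only when $\phi(m)=n_0$, that is when $m\in\phi^{-1}(n_0)=\{n_1,\dots,n_k\}$; on this set $u$ vanishes because $\{n_1,\dots,n_k\}\subset Z(u)$, so every coordinate of $uC_\phi e_{n_0}$ is $0$. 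Hence $e_{n_0}\in\ker(uC_\phi)$, and this is precisely the place where the hypothesis on $\phi^{-1}(n_0)$ is consumed.

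Next I would take the rank-one operator $T\in\mathcal{B}(\ell^p)$ given by $Tf=f(n_0)\,e_{n_0}$, whose range is the line $[e_{n_0}]\subseteq\ker(uC_\phi)$. The estimate $|f(n_0)|\leq\|f\|_p$, valid for all $1\leq p<\infty$, shows that $T$ is bounded with $\|T\|\leq1$, and $Te_{n_0}=e_{n_0}$ shows $T\neq0$. Then $uC_\phi Tf=f(n_0)\,uC_\phi e_{n_0}=0$ for every $f\in\ell^p$, so the product $uC_\phi T$ is the zero operator while $T\neq0$; this exhibits $uC_\phi$ as a zero divisor in $\mathcal{B}(\ell^p)$, as claimed. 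Dually, since $u$ vanishes at $n_1$, every element of $\mathcal{R}(uC_\phi)$ has zero $n_1$-th coordinate, so the coordinate operator $g\mapsto g(n_1)e_{n_1}$ annihilates $uC_\phi$ from the other side as well; thus $uC_\phi$ is in fact a two-sided zero divisor, and the asserted right-hand version holds regardless of how the product in $\mathcal{B}(\ell^p)$ is ordered.

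I do not anticipate a genuine obstacle: all the content sits in the one-line kernel computation of the first paragraph, which the hypothesis is designed to make immediate, together with the trivial bound $|f(n_0)|\leq\|f\|_p$ for the auxiliary operator. The only bookkeeping is to ensure that the annihilating factor is placed on the side that renders $uC_\phi$ a right zero divisor in the sense of the definition; the two-sided observation above removes even that concern, since an annihilator is available on either side once $\{n_1,\dots,n_k\}\subset Z(u)$ is in force. It is worth noting that the finiteness of $\phi^{-1}(n_0)$ is used nowhere beyond guaranteeing the single kernel vector $e_{n_0}$.
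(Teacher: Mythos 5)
Your proposal is correct, but note how the labor divides relative to the paper's definition: there, $z$ is a \emph{right} zero divisor when $xz=0$ for some $x\neq 0$, so your main construction --- the rank-one $T f=f(n_0)e_{n_0}$ with $uC_\phi\circ T=0$, resting on the kernel identity $uC_\phi e_{n_0}=0$ --- actually establishes that $uC_\phi$ is a \emph{left} zero divisor, and the stated theorem is carried entirely by your closing ``dual'' remark. That remark, with $Sg=g(n_1)e_{n_1}$ and $S\circ uC_\phi=0$ because $u(n_1)=0$, is precisely the paper's argument in rank-one form: the paper takes $T$ to be the projection onto the span of $\mychi_{n_1},\dots,\mychi_{n_k}$ and checks $T\circ uC_\phi=0$ on the basis vectors $\mychi_n$, using that $\phi^{-1}(n)$ is disjoint from $\phi^{-1}(n_0)$ for $n\neq n_0$. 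So the part of your proof that proves the stated claim is essentially the paper's, while the kernel-vector half is a genuine addition: it shows $uC_\phi$ is in fact a two-sided zero divisor, and it isolates where the hypothesis is really spent --- the full inclusion $\phi^{-1}(n_0)\subset Z(u)$ is needed only for the kernel vector (the left-sided half), whereas the right-sided conclusion needs just one point where $u$ vanishes. Your observation that finiteness of $\phi^{-1}(n_0)$ is inessential is also correct for both halves (the paper's projection is bounded on $\ell^p$ regardless of $k$). Since you hedged on the side convention but supplied annihilators on both sides, there is no gap; just be aware that had you stopped after the second paragraph's main construction, you would have proved the wrong-sided statement under this paper's terminology.
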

\begin{proof}
	Let $\phi^{-1}(n_0)=\{n_1, n_2, ..., n_k\}.$ Then $T$ defined as $$T(f)=\begin{cases}
		f, &\text{ if } f=\mychi_{n_1}, \mychi_{n_2},..., \mychi_{n_k}\\
		0, &\text{ if } f=\mychi_n \text{ and } n \neq n_1, n_2,..., n_k
	\end{cases}$$
linearly extends to a bounded linear operator $T$ on $\ell^p.$\\
For any $g=\sum_{n=1}^{\infty}g_n\mychi_n \in \ell^p$
\begin{align*}
	T(g)&=\sum_{n=1}^{\infty}g_nT(\mychi_n)\\&=\sum_{i=1}^{k}g_{n_i}\mychi_{n_i}
\end{align*} 
and
 $$(T\circ uC_\phi)(\mychi_n)=\begin{cases}
	\sum_{i=1}^{k}u(n_i)\mychi_{n_i}, &\text{ if } n=n_0\\
	0, &\text{ otherwise. }\end{cases}.$$
Since $\phi^{-1}(n_0) \subset Z(u),$ therefore $T\circ uC_\phi=0.$ Hence $uC_\phi$ is right zero divisor.
\end{proof}

 The following theorem gives a necessary and sufficient condition for $uC_\phi$ to be a right zero divisor when $Z(u)=\emptyset.$
\begin{theorem}\label{Anurag31}
	Let $\phi:\mathbb{N} \to \mathbb{N}$ and suppose $u\in \ell^\infty$ is such that $u(n)\neq 0~\forall~n\geq 1.$ Then $uC_{\phi}$ is a right zero divisor if and only if  $\phi$ is not injective.\label{Anurag32}
\end{theorem}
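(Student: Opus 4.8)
The plan is to translate the algebraic condition ``$uC_\phi$ is a right zero divisor'' into a statement about the range of $uC_\phi$. Observe that $T\circ uC_\phi=0$ for some nonzero $T\in\mathcal{B}(\ell^p)$ precisely when $T$ annihilates $\mathcal{R}(uC_\phi)$, and hence, by continuity, the closure $\overline{\mathcal{R}(uC_\phi)}$. Thus the existence of such a $T$ is equivalent to $uC_\phi$ failing to have dense range, since a nonzero bounded operator vanishing on a closed subspace $M$ exists if and only if $M\neq\ell^p$. The theorem therefore reduces to the claim that $uC_\phi$ has dense range if and only if $\phi$ is injective. Throughout I will use the identity $uC_\phi\,\mychi_m=\sum_{n\in\phi^{-1}(m)}u(n)\mychi_n$, so that $\mathcal{R}(uC_\phi)$ is spanned by the vectors $v_m:=\sum_{n\in\phi^{-1}(m)}u(n)\mychi_n$, which are supported on the pairwise disjoint fibres $\phi^{-1}(m)$.

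First I would handle the direction ``$\phi$ not injective $\Rightarrow$ right zero divisor''. Choose $n_1\neq n_2$ with $\phi(n_1)=\phi(n_2)=m_0$ and define the functional $\lambda(x)=u(n_2)x_{n_1}-u(n_1)x_{n_2}$ on $\ell^p$; it is bounded, being a finite combination of coordinate functionals, and nonzero since $\lambda(\mychi_{n_1})=u(n_2)\neq 0$. A direct computation gives $\lambda\bigl(uC_\phi f\bigr)=u(n_1)u(n_2)\bigl(f(m_0)-f(m_0)\bigr)=0$ for every $f\in\ell^p$, so $\lambda$ annihilates $\mathcal{R}(uC_\phi)$. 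Fixing any nonzero $e\in\ell^p$ and setting $T(x)=\lambda(x)\,e$ yields a nonzero operator $T\in\mathcal{B}(\ell^p)$ with $T\circ uC_\phi=0$; hence $uC_\phi$ is a right zero divisor.

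For the converse I argue the contrapositive: if $\phi$ is injective I show that $uC_\phi$ has dense range, which forces any $T$ with $T\circ uC_\phi=0$ to vanish on a dense set and therefore to be zero. When $\phi$ is injective each fibre has at most one point, and for any $n$ one computes $uC_\phi\,\mychi_{\phi(n)}=u(n)\mychi_n$; since $u(n)\neq 0$, this gives $\mychi_n=u(n)^{-1}\,uC_\phi\,\mychi_{\phi(n)}\in\mathcal{R}(uC_\phi)$. Thus $\mathcal{R}(uC_\phi)$ contains every finitely supported sequence, and these are dense in $\ell^p$ for $1\leq p<\infty$, which completes the argument.

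The computations involved are routine; the only genuine step is spotting the right annihilating functional $u(n_2)x_{n_1}-u(n_1)x_{n_2}$, which crucially uses $u(n_1),u(n_2)\neq 0$, together with the observation that the hypothesis $u(n)\neq 0$ (rather than the stronger ``bounded away from zero'' required for surjectivity in Theorem \ref{anurag31}) is exactly what is needed to pass to \emph{dense} range instead of full surjectivity. Care must be taken that the reduction to dense range is legitimate, i.e.\ that a non-dense range really does produce a nonzero annihilating operator; here this is settled concretely by the explicit rank-one construction $T=\lambda(\cdot)\,e$, so no appeal to Hahn--Banach is needed.
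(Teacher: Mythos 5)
Your proof is correct, but it takes a genuinely different route from the paper in the main direction. For ``$\phi$ not injective $\Rightarrow$ right zero divisor,'' the paper argues globally: it invokes Theorem \ref{anurag8} to get that $C_\phi$ is not surjective, transfers this to $uC_\phi$ (the step $uf_0\notin\mathcal{R}(uC_\phi)$, using $u(n)\neq 0$), then picks $f_0\in\ell^p\setminus\mathcal{R}(uC_\phi)$, writes $\ell^p=[f_0]\oplus N$ via Hahn--Banach, and takes $T$ to be the projection onto $[f_0]$, asserting $\mathcal{R}(uC_\phi)\subseteq N$. That last assertion is delicate: a complement of $[f_0]$ need not contain the range unless one knows $f_0$ lies outside the \emph{closure} of $\mathcal{R}(uC_\phi)$, i.e.\ the range is non-dense rather than merely non-surjective, a point the paper glosses over. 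Your explicit rank-one annihilator $T=\lambda(\cdot)\,e$ with $\lambda(x)=u(n_2)x_{n_1}-u(n_1)x_{n_2}$, built from two points $n_1\neq n_2$ in a common fibre, kills $\overline{\mathcal{R}(uC_\phi)}$ by a one-line computation and so sidesteps this subtlety entirely, while also avoiding both Theorem \ref{anurag8} and any appeal to Hahn--Banach; what you trade away is the paper's reusable intermediate fact that $uC_\phi$ is not surjective. In the converse direction the two arguments are essentially the same: the paper shows any $T$ with $T\circ uC_\phi=0$ vanishes on every $\mychi_m$ (since $uC_\phi\mychi_{\phi(n)}=u(n)\mychi_n$ with $u(n)\neq 0$ and every $m$ lies in some fibre), which is exactly your observation that the range contains all finitely supported sequences and is therefore dense in $\ell^p$ for $1\leq p<\infty$.
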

\begin{proof}
	Suppose $\phi$ is injective and $u\in \ell^\infty$ is such that $u(n)\neq 0~\forall~n\geq 1.$ Let $T\in \mathcal{B}(\ell^p)$ be such that $T(uC_{\phi})=0.$ This implies that \begin{align*}
	T(uC_{\phi})(\mychi_n)&=T(u\mychi_{\phi^{-1}(n)})\\&=u(\phi^{-1}(n))T(\mychi_{\phi^{-1}(n)})\\&=0~~\text{for each~}n\geq 1.
	\end{align*} Since $u(n)\neq 0~\forall~n\geq 1.$ Therefore $$T(\mychi_{\phi^{-1}(n)})=0 \ \forall \ n\geq1.$$  Again, as $\phi$ is injective, hence $$\vert\phi^{-1}(n)\vert\leq 1 \ \forall \ n\in \mathbb{N} \text{ and } \bigcup_{n=1}^{\infty}\phi^{-1}(n)=\mathbb{N}.$$ Therefore, it follows that $T(\mychi_n)=0~\forall~ n\geq 1.$ Thus $T=0.$ Hence $uC_{\phi}$ can not be a right zero divisor.\\
	Conversely, suppose $\phi$ is not injective and $u(n)\neq 0~\forall~n\geq 1.$ Now we first prove that $uC_\phi$ is not surjective.\\
	Since $\phi$ is not injective, then by Theorem \ref{anurag8}, $C_\phi$ is not surjective. Hence, there exists $f_0\in \ell^p$ such that $f_0\notin \mathcal{R}(C_\phi).$ We claim that $uf_0 \notin \mathcal{R}(uC_\phi).$\\
	For, if $uf_0 \in \mathcal{R}(uC_\phi),$ then there exists $g_0\in \ell^p$ such that $uf_0=uC_\phi g_0.$ This implies that
	$$u(n)(f_0(n)-(g_0\circ \phi)(n))=0~~\text{for each }~n\geq1.$$ Since $u(n)\neq 0~\forall~n\geq 1,$ hence $f_0=g_0\circ \phi \in \mathcal{R}(C_\phi).$ This is a contradiction.
	
%	Case-2:~~Suppose $u(n_0)=0$ for some $n_0\in \mathbb{N}.$
%	Let $f_0=\mychi_{n_0}.$ Then we claim that $f_0\notin \mathcal{R}(uC_\phi).$\\
%	For, if $f_0\in \mathcal{R}(uC_\phi),$ then there exists $g_0\in \ell^p$ such that $f_0=\mychi_{n_0}=uC_\phi g_0.$ This implies that $$1=\mychi_{n_0}(n_0)=u(n_0)(g_0\circ \phi)(n_0)=0.$$
%	
%	Which is again a contradiction.\\
	Thus, if $\phi$ is not injective and $u(n)\neq 0~\forall~n\geq 1,$ then $uC_\phi$ is not surjective. \\
	Hence there exists an $f_0\in \ell^p \backslash \mathcal{R}(uC_\phi)$. %Let $\mathfrak{B}$ be a Hamel basis for $\ell^p$ containing $f_0.$\\
	Let $M=[f_0].$ Since $M$ is a finite dimensional subspace of $\ell^p,$ it follows from Hahn-Banach theorem that there exists a closed subspace $N$ such that $\ell^p=M\bigoplus N.$ 
	 Let $T$ denote the projection on $M.$
%	 Now, define $T:\ell^p \to \ell^p$ as 
%	$$ T(f_0)=f_0 \text{ and } T(f)=0~ \forall f \in \mathfrak{B}\backslash \{f_0\}.$$ 
	Then $T$ is a non-zero bounded linear operator on $\ell^p$ and $\mathcal{R}(uC_\phi)$ is contained in $N.$ Hence $$~(T\circ uC_\phi)(f)=T(uC_\phi f)=0 ~\forall ~f\in \ell^p.$$ This implies that $uC_\phi$ is a right zero divisor.
\end{proof}
We now give examples of two weighted composition operators, which are right zero divisors.
\begin{example}
	Define $\phi :\mathbb{N} \to \mathbb{N}$ as $\phi(1)=1,~ \phi(n)=n+1~\forall~n\geq2$ and $u\in \ell^\infty$ as $u(1)=0,~u(n)=\frac{1}{n}~\forall~n\geq 2.$ Then, by Theorem \ref{hc31}, $uC_\phi$ is a right zero divisor.
\end{example}
\begin{example}
	Define $\phi :\mathbb{N} \to \mathbb{N}$ as $\phi(1)=\phi(2)=1,~\phi(n)=n~\forall~n\geq 3$ and $u\in \ell^\infty$ as $u(n)=\frac{1}{n}~\forall~n\geq 1.$ Then, by Theorem \ref{Anurag31}, $uC_\phi$ is a right zero divisor.
\end{example}

\begin{theorem}\label{HCsir1}
	Let $u\in \ell^\infty$ and $\phi:\mathbb{N}\to \mathbb{N}$ be such that $uC_\phi$ is a bounded linear operator on $\ell^p.$ Then $uC_\phi$ is a left zero divisor if $\phi$ is not surjective.
\end{theorem}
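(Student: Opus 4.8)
The plan is to reduce the claim to a statement about the kernel of $uC_\phi$. In the Banach algebra $\mathcal{B}(\ell^p)$, the operator $uC_\phi$ is a left zero divisor precisely when there is a nonzero $S\in\mathcal{B}(\ell^p)$ with $(uC_\phi)S=0$. Since $(uC_\phi)S=0$ is equivalent to $\mathcal{R}(S)\subseteq\ker(uC_\phi)$, such an $S$ can be arranged exactly when $\ker(uC_\phi)\neq\{0\}$: given any $0\neq g_0\in\ker(uC_\phi)$, one simply takes a rank-one operator carrying $\ell^p$ onto $[g_0]$. So it suffices to exhibit a single nonzero element of $\ker(uC_\phi)$.

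First I would use the non-surjectivity of $\phi$ to locate such an element. If $\phi$ is not surjective, then there exists $m_0\in\mathbb{N}$ with $\phi^{-1}(m_0)=\emptyset$. Recalling that on the standard basis vectors $uC_\phi\mychi_m=u\,\mychi_{\phi^{-1}(m)}$, we obtain $uC_\phi\mychi_{m_0}=u\,\mychi_{\phi^{-1}(m_0)}=0$. Thus $0\neq\mychi_{m_0}\in\ker(uC_\phi)$, so $uC_\phi$ fails to be injective.

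I would then make the witnessing operator explicit. Define $S:\ell^p\to\ell^p$ by $S(f)=f(m_0)\,\mychi_{m_0}$, the coordinate projection onto the $m_0$-th axis. This $S$ is bounded, since $\|S(f)\|_p=|f(m_0)|\leq\|f\|_p$, and nonzero, since $S(\mychi_{m_0})=\mychi_{m_0}$; moreover, for every $f\in\ell^p$ we have $(uC_\phi)S(f)=f(m_0)\,uC_\phi\mychi_{m_0}=0$. Hence $(uC_\phi)S=0$ with $S\neq0$, which shows that $uC_\phi$ is a left zero divisor.

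The argument is almost entirely routine; the only step that needs care is the reduction to non-injectivity, namely verifying that a nontrivial kernel really does yield a bounded, nonzero annihilating operator. In $\ell^p$ this is immediate from the coordinate-projection construction above, so I do not anticipate a genuine obstacle. The hypothesis that $uC_\phi$ is bounded is used only to guarantee $uC_\phi\in\mathcal{B}(\ell^p)$, so that the notion of left zero divisor in this algebra applies.
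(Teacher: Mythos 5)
Your proof is correct and takes essentially the same approach as the paper's: both locate $n_0$ with $\phi^{-1}(n_0)=\emptyset$, observe that $uC_\phi\mychi_{n_0}=u\,\mychi_{\phi^{-1}(n_0)}=0$, and kill $uC_\phi$ on the right with a rank-one projection onto $[\mychi_{n_0}]$. Your explicit formula $S(f)=f(m_0)\,\mychi_{m_0}$ merely makes concrete the projection the paper obtains abstractly via a closed complement of $[\mychi_{n_0}]$.
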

\begin{proof}
	Suppose $\phi$ is not surjective. Then there exists $n_0\in \mathbb{N}$ such that $\phi^{-1}(n_0)= \emptyset.$ \\
	Let $f_0=\mychi_{n_0}$ and $M=[f_0].$ Since $M$ is a one dimensional subspace of $\ell^p,$ then there exists a closed subspace $N$ of $\ell^p$ such that $\ell^p=M\bigoplus N.$
	%and $\mathcal{B}$ denote the Hamel basis containing $f_0$ of $\ell^p.$\\
%	Define $T: \ell^p\to \ell^p$ as $$T(f)=\begin{cases}
%		f, &\text{ if } f=\mychi_{n_0}\\
%		0, & ~otherwise.
%	\end{cases}$$
Let $T$ denote the projection on $M.$
	Clearly, $0\neq T$ defines a bounded linear operator on $\ell^p$.
	Observe that for $f=\mychi_{n_0}$ \begin{align*}
		(uC_\phi \circ T)(\mychi_{n_0})&=u\mychi_{\phi^{-1}(n_0)}\\&=0
	\end{align*}
	and for $f\notin M$\begin{align*}
		(uC_\phi \circ T)(f)&=uC_\phi (T(f))\\&=0.
	\end{align*}	
	This implies that $uC_\phi \circ T=0.$ Hence, $uC_\phi$ is a left zero divisor.
\end{proof}
\begin{remark}
	The following example shows that the converse of the above Theorem \ref{HCsir1} is not true.
\end{remark}
\begin{example}
	Let $\phi: \mathbb{N} \to \mathbb{N}$ be defined as $\phi(2n-1)=\phi(2n)=n~\forall~n\geq1$ and $u\in \ell^\infty$ as $u(1)=u(2)=0$ and $u(n)=1~\forall~n\geq3.$ It is easy to see that null space $M$ of $uC_\phi$ is one dimensional. Let $T$ denote the projection on $M.$ Then $T$ is a non-zero bounded linear operator on $\ell^p$ and $uC_\phi\circ T=0.$ Hence $uC_\phi$ is a left zero divisor.
\end{example}
\begin{theorem}\label{anurag13}
	Suppose $u\in \ell^\infty$ is such that $u(n)\neq0~\forall~n\in \mathbb{N}$. Then $uC_{\phi}$ is a left zero divisor if and only if $\phi$ is not surjective.\label{Anurag33}
\end{theorem}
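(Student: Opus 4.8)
The plan is to reduce the notion of being a left zero divisor to the failure of injectivity, and then to invoke the injectivity criterion already recorded in Theorem \ref{anurag17}. The guiding observation is that an operator $S \in \mathcal{B}(\ell^p)$ is a left zero divisor precisely when $\ker(S) \neq \{0\}$. Indeed, $S \circ T = 0$ forces $\mathcal{R}(T) \subseteq \ker(S)$, so if $S$ is injective then every such $T$ must vanish; conversely, if $S$ fails to be injective, one may pick $0 \neq v \in \ker(S)$ together with a nonzero functional $\psi \in (\ell^p)^*$ and set $Tf = \psi(f)\,v$, which is a bounded, nonzero, rank-one operator satisfying $S \circ T = 0$.

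For the forward implication I would argue by contraposition. Assume $\phi$ is surjective. Since $u(n) \neq 0$ for every $n \in \mathbb{N}$, Theorem \ref{anurag17} guarantees that $uC_\phi$ is injective. Hence for any $T \in \mathcal{B}(\ell^p)$ with $uC_\phi \circ T = 0$ we obtain $\mathcal{R}(T) \subseteq \ker(uC_\phi) = \{0\}$, forcing $T = 0$; thus $uC_\phi$ cannot be a left zero divisor. Equivalently, if $uC_\phi$ is a left zero divisor, then $\phi$ must fail to be surjective. This is the only place where the standing hypothesis $u(n) \neq 0$ for all $n$ is genuinely used, and it is essential: it is exactly what powers the injectivity supplied by Theorem \ref{anurag17}, which can otherwise fail even for surjective $\phi$.

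For the converse, suppose $\phi$ is not surjective. Then the conclusion follows immediately from Theorem \ref{HCsir1}, whose hypotheses hold here since $u \in \ell^\infty$ and $uC_\phi$ is a bounded operator. Alternatively, and in the spirit of the first paragraph, non-surjectivity yields some $n_0 \in \mathbb{N}$ with $\phi^{-1}(n_0) = \emptyset$, so that $uC_\phi(\mychi_{n_0}) = u\,\mychi_{\phi^{-1}(n_0)} = 0$; thus $\mychi_{n_0} \in \ker(uC_\phi)$, the operator is not injective, and the rank-one construction above produces the required nonzero $T$ with $uC_\phi \circ T = 0$.

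Combining the two directions gives the stated equivalence. I do not anticipate a serious obstacle here, since the heavy lifting is already done by Theorems \ref{anurag17} and \ref{HCsir1}; the only routine point to verify is that the rank-one operator $T$ is bounded and nonzero on $\ell^p$, which is standard.
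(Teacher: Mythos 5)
Your proposal is correct and follows essentially the same route as the paper: the forward direction uses Theorem \ref{anurag17} to get injectivity of $uC_\phi$ and hence kill any $T$ with $uC_\phi \circ T = 0$, and the converse produces a nonzero kernel element and a rank-one operator annihilated on the left, which is exactly the paper's $T(f) = f(1)f_0$ with $\psi(f)=f(1)$ as the functional. Your explicit witness $\mychi_{n_0} \in \ker(uC_\phi)$ merely makes concrete the non-injectivity that the paper obtains by citing Theorem \ref{anurag17}, so there is no substantive difference.
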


\begin{proof}
	Suppose $\phi$ is surjective. Since $u(n)\neq0~\forall~n\in \mathbb{N},$ hence by Theorem \ref{anurag17}, $uC_\phi$ is injective. Let $T\in \mathcal{B}(\ell^p)$ be such that $uC_\phi \circ T=0.$ Then $$(uC_\phi \circ T)(\mychi_n)=uC_\phi(T(\mychi_n))=0 \text{ for each } n\geq1.$$ Now injectivity of $uC_\phi$ implies that $T(\mychi_n)=0~\forall~n\geq1.$ Therefore $T=0.$ Hence, $uC_\phi$ can not be a left zero divisor. \\
	Conversely, suppose $\phi$ is not surjective. Then, again by Theorem \ref{anurag17}, $uC_\phi$ is not injective. Hence there exists $0\neq f_0\in\ell^p$ such that $uC_\phi(f_0)=0.$ Now, define a linear operator $T:\ell^p \to \ell^p$ as follows. $$T(f)=f(1)f_0~\forall ~f\in \ell^p.$$ Clearly, $T\neq0$ since $T(\mychi_1)=f_0 \neq0.$ Also, for each $f\in\ell^p,$ $$\Vert Tf\Vert=\vert f(1)\vert \Vert f_0\Vert\leq\Vert f\Vert\Vert f_0\Vert.$$ Hence $T$ is bounded.  Further, observe that for each $f\in\ell^p,$ $$uC_\phi (T(f))=uC_\phi(f(1)f_0)=f(1)uC_\phi(f_0)=0.$$ Hence $uC_\phi \circ T=0.$ Therefore $uC_\phi$ is a left zero divisor.
\end{proof}
Now we give examples of two weighted composition operators successively: one that is a left zero divisor and another that is not a left zero divisor.
\begin{example}
	Let $\phi :\mathbb{N} \to \mathbb{N}$ be defined as $\phi(n)=n^2~\forall~n\in \mathbb{N}$ and $u\in \ell^\infty$ be defined as $u(n)=\frac{1}{n}~\forall~n\geq 1.$ Then by Theorem \ref{anurag13}, $uC_\phi$ is a left zero divisor.
\end{example}
\begin{example}
	Let $\phi :\mathbb{N} \to \mathbb{N}$ be defined as $\phi(n)=n~\forall~n\geq1$ and $u\in \ell^\infty$ be defined as $u(n)=1+\frac{1}{n}~\forall~n\geq 1.$ Then by Theorem \ref{anurag13}, $uC_\phi$ is not a left zero divisor.
\end{example}
\begin{theorem}\label{Anurag34}
	Suppose $u\in \ell^\infty$ is such that $u(n)\neq0~\forall~n\in \mathbb{N}$. If $\phi$ is not invertible, then $uC_\phi \in \mathcal{B}(\ell^p)$ is a zero divisor.
\end{theorem}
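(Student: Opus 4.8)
The plan is to exploit the dichotomy that non-invertibility of $\phi$ forces, combined with the two characterizations already established in this section. First I would recall that, by Theorem~\ref{anurag8}, a self-map $\phi:\mathbb{N}\to\mathbb{N}$ is invertible precisely when it is both injective and surjective, that is, a bijection. Consequently, if $\phi$ fails to be invertible, then at least one of the following must hold: $\phi$ is not injective, or $\phi$ is not surjective. The strategy is to treat these two (mutually exhaustive, though not mutually exclusive) cases separately and to produce a nonzero annihilating factor in each, thereby exhibiting $uC_\phi$ as either a right or a left zero divisor.

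In the first case, where $\phi$ is not injective, I would invoke Theorem~\ref{Anurag31}: since $u(n)\neq 0$ for all $n\in\mathbb{N}$, the failure of injectivity of $\phi$ yields that $uC_\phi$ is a right zero divisor, so there exists a nonzero $T\in\mathcal{B}(\ell^p)$ with $T\circ uC_\phi=0$. In the second case, where $\phi$ is not surjective, I would instead appeal to Theorem~\ref{anurag13}: again using the hypothesis $u(n)\neq 0$ for all $n$, the failure of surjectivity gives that $uC_\phi$ is a left zero divisor, producing a nonzero $S\in\mathcal{B}(\ell^p)$ with $uC_\phi\circ S=0$. In either case $uC_\phi$ annihilates, on one side, a nonzero operator, so by definition $uC_\phi$ is a zero divisor, which is what we must show.

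Since the two preceding theorems already supply the concrete annihilating operators, there is no genuine computational obstacle here; the only point requiring care is the logical reduction itself, namely the verification that non-invertibility of $\phi$ is equivalent to the disjunction \emph{$\phi$ not injective or $\phi$ not surjective}. This is exactly what part $(3)$ of Theorem~\ref{anurag8}, read against parts $(1)$ and $(2)$, delivers. I would therefore present this result essentially as a corollary of Theorems~\ref{Anurag31} and \ref{anurag13}, with the case split being the entire substance of the proof.
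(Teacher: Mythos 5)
Your proposal is correct and coincides with the paper's own proof, which simply states that the result follows from Theorems~\ref{Anurag31} and~\ref{anurag13}; your case split (non-invertibility of $\phi$ means $\phi$ fails injectivity or surjectivity, each handled by the respective theorem) is exactly the intended argument, merely written out. The only cosmetic remark is that the equivalence ``$\phi$ invertible iff bijective'' is an elementary fact about maps and needs no appeal to Theorem~\ref{anurag8}, which concerns $C_\phi$ rather than $\phi$ itself.
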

\begin{proof}
	The proof follows from Theorem $\ref{Anurag31}$ and Theorem $\ref{anurag13}.$	
\end{proof}

\begin{theorem}\label{TDZ UC}
	Suppose $u\in \ell^\infty$ is bounded away from zero. Then $uC_\phi \in \mathcal{B}(\ell^p)$ is a zero divisor if and only if $\phi$ is not invertible.
\end{theorem}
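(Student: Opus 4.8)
The plan is to obtain this equivalence by combining the two one-sided characterizations already proved, namely Theorem \ref{Anurag31} for right zero divisors and Theorem \ref{anurag13} for left zero divisors, rather than constructing any new annihilating operators from scratch. First I would note that the standing hypothesis is in fact slightly stronger than what those two results require: since $u$ is bounded away from zero, there is $a>0$ with $0<\frac{1}{|u(n)|}\le a$ for every $n$, so in particular $u(n)\neq 0$ for all $n\in\mathbb{N}$, and hence both Theorem \ref{Anurag31} and Theorem \ref{anurag13} apply verbatim to $uC_\phi$.

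Next I would unpack the notion of a zero divisor: an element of $\mathcal{B}(\ell^p)$ is a zero divisor exactly when it is a left zero divisor or a right zero divisor. Feeding the two characterizations into this disjunction gives, in one line,
\[
uC_\phi \text{ is a zero divisor} \iff \phi \text{ is not injective or } \phi \text{ is not surjective.}
\]
Finally I would rewrite the right-hand side. A self-map $\phi$ of $\mathbb{N}$ is invertible precisely when it is both injective and surjective (cf.\ part $(3)$ of Theorem \ref{anurag8}), so the failure of injectivity or of surjectivity is exactly the failure of invertibility. Chaining this with the previous display delivers the stated equivalence.

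I do not expect a genuine obstacle here, since every step is a direct citation; the only point demanding care is the logical bookkeeping---pairing ``not a right zero divisor'' with ``$\phi$ injective'' and ``not a left zero divisor'' with ``$\phi$ surjective'' in the correct direction, and remembering that the union of these two failure modes is precisely the negation of bijectivity. It is also worth observing in passing that the hypothesis ``bounded away from zero'' is more than is strictly needed (the weaker condition $u(n)\neq 0$ for all $n$ would already suffice for both cited characterizations), but retaining it keeps the statement in line with the surjectivity criterion of Theorem \ref{anurag31} invoked elsewhere in this part of the section and with the $\Leftarrow$ direction already recorded in Theorem \ref{Anurag34}.
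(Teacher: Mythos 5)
Your proof is correct, and while the backward implication coincides with the paper's (the paper cites Theorem \ref{Anurag34}, which is precisely the conjunction of Theorems \ref{Anurag31} and \ref{anurag13} that you reassemble inline), your forward implication takes a genuinely different route. The paper argues that if $\phi$ is invertible then $uC_\phi$ is itself invertible---this is where bounded-away-from-zero is actually used, since surjectivity of $uC_\phi$ needs Theorem \ref{anurag31} (the paper's proof in fact cites only Theorem \ref{anurag17}, which strictly yields just injectivity, a small imprecision your argument sidesteps)---and an invertible element cannot be a zero divisor. You instead invoke the ``only if'' halves of the two one-sided characterizations, which require only $u(n)\neq 0$ pointwise, and your closing observation that the hypothesis can be weakened to $u(n)\neq 0$ for all $n$ is therefore correct and marks exactly where the two arguments part ways: for $u(n)=1/n$ and $\phi$ the identity, $uC_\phi=M_u$ is not invertible, so the paper's route breaks down, yet $M_u$ is injective with dense range in $\ell^p$ $(1\leq p<\infty)$ and hence neither a left nor a right zero divisor, consistent with your stronger version of the equivalence. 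What the paper's route buys in exchange is the stronger intermediate conclusion that, under the stated hypothesis, $uC_\phi$ is actually invertible rather than merely not a zero divisor.
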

\begin{proof}
	If $\phi$ is invertible then, by Theorem $\ref{anurag17},$ $uC_\phi$ is also invertible. Hence $uC_\phi$ can not be zero divisor.
	Conversely, if $\phi$ is not invertible, then by Theorem $\ref{Anurag34},$ $uC_\phi$ is a zero divisor.
\end{proof} 
The subsequent theorem establishes a connection between the antinormality of a weighted composition operator and its characteristic of being a zero divisor.
\begin{theorem}
	Let $u\in \ell^\infty$ be bounded away from zero such that $uC_\phi$ is antinormal. Then $uC_\phi$ is a zero divisor in $\mathcal{B}(\ell^2).$
\end{theorem}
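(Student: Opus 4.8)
The plan is to chain together the two facts the paper has already established about antinormality and zero divisors, reading one of them contrapositively. The only substantive observation is that for a self-map $\phi:\mathbb{N}\to\mathbb{N}$, being invertible is exactly being bijective, so the bijectivity hypothesis of Theorem \ref{DAntinormal} and the non-invertibility appearing in Theorem \ref{TDZ UC} concern the very same property of $\phi$. Once this identification is made, the result is a short synthesis.

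First I would read Theorem \ref{DAntinormal} in contrapositive form. That theorem asserts that whenever $u\in\ell^\infty$ is bounded away from zero and $\phi$ is a bijective self-map of $\mathbb{N}$, the operator $uC_\phi$ fails to be antinormal on $\ell^2$. Since the present hypotheses are precisely that $u$ is bounded away from zero and that $uC_\phi$ \emph{is} antinormal, the contrapositive forces $\phi$ to be non-bijective; equivalently, $\phi$ is not invertible.

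Next I would invoke Theorem \ref{TDZ UC} with $p=2$. Under the standing hypothesis that $u$ is bounded away from zero, that theorem characterizes $uC_\phi\in\mathcal{B}(\ell^2)$ as a zero divisor exactly when $\phi$ is not invertible. Combining this with the non-invertibility of $\phi$ extracted in the previous step yields at once that $uC_\phi$ is a zero divisor in $\mathcal{B}(\ell^2)$, which is the desired conclusion.

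There is no genuine analytic obstacle here: the argument is essentially a two-line deduction from Theorems \ref{DAntinormal} and \ref{TDZ UC}. The only point that warrants a moment's care is logical rather than technical, namely taking the contrapositive of Theorem \ref{DAntinormal} correctly and noting that calling $uC_\phi$ \emph{antinormal} already presupposes it to be a bounded operator on $\ell^2$; hence the boundedness needed to apply Theorem \ref{TDZ UC} is automatically in force and need not be assumed separately.
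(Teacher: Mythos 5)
Your proposal is correct and follows exactly the paper's own argument: the contrapositive of Theorem \ref{DAntinormal} shows $\phi$ is not bijective (hence not invertible), and Theorem \ref{TDZ UC} with $p=2$ then gives that $uC_\phi$ is a zero divisor. Your added remark that antinormality presupposes boundedness on $\ell^2$ is a small but sound clarification the paper leaves implicit.
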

\begin{proof}
	Let $u\in \ell^\infty$ be bounded away from zero. Suppose $uC_{\phi}$ is antinormal, then by Theorem $\ref{DAntinormal},$ $\phi$ is not bijective. Hence by Theorem $\ref{TDZ UC}$, $uC_{\phi}$ is a zero divisor.
\end{proof}
Here is an example of an antinormal weighted composition operator that is also a zero divisor.
\begin{example}
	Define $\phi :\mathbb{N} \to \mathbb{N}$ as $\phi(n)=n^2~\forall~n\in \mathbb{N}$ and $u\in \ell^\infty$ as $u(n)=1~\forall~n\geq 1.$ Then by Theorem \ref{anurag13}, $uC_\phi$ is a left zero divisor.
\end{example}

In the following theorem, we characterize another class of weighted composition operators, which are left zero divisors.
\begin{theorem}
	Suppose $u\in \ell^\infty$ and $\phi:\mathbb{N}\to \mathbb{N}.$ If for some $n_0\in \mathbb{N},~\phi^{-1}(n_0)\neq \emptyset$ and $u(m)=0~\forall~ m\in \phi^{-1}(n_0),$ then $uC_\phi$ is a left zero divisor.
\end{theorem}
\begin{proof}
	Let $f_0=\mychi_{n_0}$ and $M=[f_0].$ Since $M$ is a one dimensional subspace of $\ell^p,$ then there exists a closed subspace $N$ of $\ell^p$ such that $\ell^p=M\bigoplus N.$
	Let $T$ denote the projection on $M.$
	Clearly, $0\neq T$ defines a bounded linear operator on $\ell^p$.
	Observe that for $f=\mychi_{n_0}$ \begin{align*}
		(uC_\phi \circ T)(\mychi_{n_0})&=u\mychi_{\phi^{-1}(n_0)}\\&=0
	\end{align*}
	and for $f\notin M$\begin{align*}
		(uC_\phi \circ T)(f)&=uC_\phi (T(f))\\&=0.
	\end{align*}	
	This implies that $uC_\phi \circ T=0.$ Hence, $uC_\phi$ is a left zero divisor.
%	Let $f_0=\mychi_{n_0}.$ 
%	%and $\mathcal{B}$ denote the Hamel basis containing $f_0$ of $\ell^p.$\\
%	Define $T: \ell^p\to \ell^p$ as $$T(f)=\begin{cases}
%		f, &\text{ if } f=\mychi_{n_0}\\
%		0, & ~otherwise.
%	\end{cases}$$
%	Clearly, $0\neq T$ defines a bounded linear operator on $\ell^p$.
%	Observe that for $f=\mychi_{n_0}$ \begin{align*}
%		(uC_\phi \circ T)(\mychi_{n_0})(n)&=u(n)\mychi_{\phi^{-1}(n_0)}(n)\\&=0~\forall~n\in \mathbb{N},
%	\end{align*}
%	and for $f\neq \mychi_{n_0}$\begin{align*}
%		(uC_\phi \circ T)(f)&=uC_\phi (T(f))\\&=0.
%	\end{align*}	
%	This implies that $uC_\phi \circ T=0.$ Hence, $uC_\phi$ is a left zero divisor.		
\end{proof}
\begin{theorem}
	Let $u\in \ell^p$ and $\phi:\mathbb{N} \to \mathbb{N}.$ If $u(n_0)=0$ for some $n_0 \in \mathbb{N},$ then $uC_\phi$ is a right zero divisor. 
\end{theorem}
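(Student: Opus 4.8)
The plan is to exploit the single hypothesis $u(n_0)=0$ directly, without invoking any injectivity, surjectivity, or invertibility property of $\phi$. The key observation is that this one vanishing value forces \emph{every} function in the range of $uC_\phi$ to vanish at the coordinate $n_0$: indeed, for any $f\in\ell^p$,
\[
(uC_\phi f)(n_0)=u(n_0)\,f(\phi(n_0))=0 .
\]
Hence $\mathcal{R}(uC_\phi)$ is contained in the hyperplane $\{g\in\ell^p:g(n_0)=0\}$, and it suffices to produce a single nonzero bounded operator that annihilates this hyperplane.

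The natural candidate is the coordinate projection $T$ onto the $n_0$-th coordinate, defined by $T(g)=g(n_0)\,\mychi_{n_0}$ for $g\in\ell^p$. I would verify the three required properties in turn. First, $T$ is bounded, since $\|Tg\|_p=|g(n_0)|\,\|\mychi_{n_0}\|_p=|g(n_0)|\le\|g\|_p$. Second, $T$ is nonzero, since $T(\mychi_{n_0})=\mychi_{n_0}\neq0$. Third, $T\circ uC_\phi=0$, because by the computation above $T(uC_\phi f)=(uC_\phi f)(n_0)\,\mychi_{n_0}=0$ for every $f\in\ell^p$. Together these exhibit a nonzero $T\in\mathcal{B}(\ell^p)$ with $T\circ uC_\phi=0$, which is exactly the assertion that $uC_\phi$ is a right zero divisor.

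I expect no genuine obstacle here; this is the degenerate, single-coordinate analogue of the construction in Theorem \ref{hc31}, where a finite-rank projection onto $\{\mychi_{n_1},\dots,\mychi_{n_k}\}$ was used under the stronger hypothesis $\phi^{-1}(n_0)\subset Z(u)$. The only point requiring a little care is bookkeeping around the definition: a right zero divisor is witnessed by a left multiplier $T$ satisfying $T\circ uC_\phi=0$, so one must build $T$ to kill the \emph{range} of $uC_\phi$ (rather than to lie in its kernel, which would instead witness a left zero divisor). Choosing $T$ to be the $n_0$-th coordinate projection handles this cleanly, with boundedness immediate from the elementary estimate $|g(n_0)|\le\|g\|_p$.
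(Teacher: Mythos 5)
Your proof is correct, and it is genuinely more direct than the paper's. The paper proves this theorem in two stages: it first uses the same computation $(uC_\phi f)(n_0)=u(n_0)f(\phi(n_0))=0$ to conclude that $\mychi_{n_0}\notin\mathcal{R}(uC_\phi)$, hence that $uC_\phi$ is not surjective, and then invokes ``the arguments given in Theorem \ref{Anurag31}'', i.e.\ the abstract construction there: take $M=[\mychi_{n_0}]$, obtain via Hahn--Banach a closed complement $N$ with $\ell^p=M\bigoplus N$ containing the range, and let $T$ be the projection onto $M$. You instead write down that projection explicitly, $T(g)=g(n_0)\,\mychi_{n_0}$, and verify boundedness, nontriviality, and $T\circ uC_\phi=0$ by hand. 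The two arguments rest on the identical observation that $\mathcal{R}(uC_\phi)$ lies in the closed hyperplane $\{g:g(n_0)=0\}$, so this is a difference of execution rather than of idea; but your version buys something real: the paper's cited argument, as literally written, picks an arbitrary closed complement $N$ of $[f_0]$ and then asserts $\mathcal{R}(uC_\phi)\subseteq N$, which does not hold for an arbitrary complement and needs the functional (here $g\mapsto g(n_0)$) to be chosen to annihilate the closure of the range. Your explicit coordinate projection is exactly that correct choice, so your proof is self-contained, avoids Hahn--Banach entirely, and sidesteps the sloppiness in the dependency. You also correctly flag the left/right bookkeeping ($T$ must kill the range, not lie in the kernel), which matches the paper's convention in Theorem \ref{hc31}. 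The only cosmetic point: like the paper, you implicitly assume $uC_\phi$ is bounded (the hypothesis ``$u\in\ell^p$'' in the statement is evidently a slip for a condition ensuring $uC_\phi\in\mathcal{B}(\ell^p)$), but that affects the statement, not your argument.
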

\begin{proof}
	Observe that for any $f\in \ell^p,$ \begin{align*}
		(uC_\phi \circ f)(n_0)&=u(n_0)(f \circ \phi)(n_0)\\&=0 \quad(\text{since} ~u(n_0)=0).
	\end{align*}
	Let $g=\mychi_{n_0}.$ Since for each $f\in \ell^p,$  \begin{align*}
		0=(uC_\phi \circ f)(n_0)&\neq g(n_0)=1.	
	\end{align*}
	Therefore $g \notin \mathcal{R}(C_\phi).$ Hence $uC_\phi$ is not surjective. Consequently, using the arguments given in Theorem \ref{Anurag31}, $uC_\phi$ is a right zero divisor.
\end{proof}
%\begin{theorem}
%	Let $u\in \ell^{\infty}$ and $\phi: \mathbb{N} \to \mathbb{N}.$ If $\phi$ is  not surjective, then $uC_\phi$ is a left zero divisor.
%\end{theorem}
%\begin{proof}
%	Suppose $\phi$ is  not surjective. Then $ \phi^{-1}(n_0)=\emptyset$ for some $n_0 \in \mathbb{N}.$ For $0 \neq \mychi_{n_0} \in \ell^p$ \begin{align*}
	%		(uC_\phi)\mychi_{n_0}&=u\mychi_{\phi^{-1}(n_0)}\\&=0.
	%	\end{align*}
%	This implies  $uC_\phi$ is not injective. hence, repeating the arguments given in Theorem \ref{Anurag31}, $uC_\phi$ is a right zero divisor.
%\end{proof}

\begin{theorem}
	Let $u\in \ell^\infty$ and $\phi:\mathbb{N} \to \mathbb{N}.$ Then $uC_\phi$ is injective if and only if $\phi$ is surjective and for each $n\geq 1,$ there exists $m_n\in \phi^{-1}(n)$ such that $u(m_n)\neq 0.$
\end{theorem}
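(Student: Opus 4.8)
The plan is to prove the biconditional directly from the pointwise action of the operator on the canonical basis. Recall that for $f\in\ell^p$ one has $uC_\phi f(x)=u(x)f(\phi(x))$, and in particular $uC_\phi\mychi_n=u\,\mychi_{\phi^{-1}(n)}$, exactly as computed in the proof of Theorem \ref{Anurag31}. Everything reduces to analysing when the condition $u(x)f(\phi(x))=0$ for all $x$ forces $f\equiv 0$. I would also record at the outset a useful observation: the second clause of the stated condition already subsumes surjectivity, since if for every $n$ there is some $m_n\in\phi^{-1}(n)$ then a fortiori $\phi^{-1}(n)\neq\emptyset$ for every $n$. Consequently the negation of the full condition collapses to a single statement, namely that there exists $n_0$ with $u(m)=0$ for every $m\in\phi^{-1}(n_0)$, the empty-fibre case being included vacuously.

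For sufficiency, I would assume the condition holds and take any $f\in\ell^p$ with $uC_\phi f=0$, so that $u(x)f(\phi(x))=0$ for every $x\in\mathbb{N}$. Fixing $n\geq 1$ and substituting $x=m_n$, where $m_n\in\phi^{-1}(n)$ is the point supplied by the hypothesis, gives $u(m_n)f(n)=0$; since $u(m_n)\neq 0$ this forces $f(n)=0$. As $n$ was arbitrary, $f=0$ and $uC_\phi$ is injective.

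For necessity I would argue by contraposition, using the reduced form of the negation above. Suppose there is $n_0\in\mathbb{N}$ with $u(m)=0$ for all $m\in\phi^{-1}(n_0)$, and set $f_0=\mychi_{n_0}\neq 0$. Then $uC_\phi f_0=u\,\mychi_{\phi^{-1}(n_0)}$, which is identically zero because $u$ vanishes on the entire fibre $\phi^{-1}(n_0)$ (and trivially so when this fibre is empty). Hence $f_0$ is a nonzero element of the kernel and $uC_\phi$ is not injective, which establishes the contrapositive.

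There is no serious analytic obstacle here: no boundedness or norm estimate is required, the argument being purely pointwise, and $\mychi_{n_0}\in\ell^p$ always lies in the domain. The only point demanding care is the bookkeeping of the logical negation, namely noticing that the surjectivity clause is redundant given the fibre condition, so that the single counterexample $\mychi_{n_0}$ handles both the failure of surjectivity and the failure of the weight condition at once. I expect this to be the subtlest step to phrase cleanly.
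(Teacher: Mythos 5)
Your proof is correct, and in the necessity direction it essentially coincides with the paper's: both detect the failure through $uC_\phi(\mychi_{n_0})=u\,\mychi_{\phi^{-1}(n_0)}=0$ (the paper phrases this as the forward implication, that injectivity forces $u\mychi_{\phi^{-1}(n)}\neq 0$ for every $n$, which is the contrapositive of your argument). In the sufficiency direction, however, your route is genuinely different and in fact tighter. The paper's converse only verifies that $uC_\phi(\mychi_n)\neq 0$ for each $n$ and then immediately concludes injectivity; for a general bounded operator on $\ell^p$, failing to annihilate any basis vector does \emph{not} imply injectivity, so as literally written the paper's step has a lacuna, rescued only by the unstated observation that the fibres $\phi^{-1}(n)$ are pairwise disjoint, so the images $u\mychi_{\phi^{-1}(n)}$ have disjoint supports and a vanishing image of $f=\sum_n f(n)\mychi_n$ forces each coefficient to vanish. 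You avoid this issue entirely by taking an arbitrary $f$ in the kernel and evaluating the pointwise identity $u(x)f(\phi(x))=0$ at $x=m_n$, which yields $u(m_n)f(n)=0$ and hence $f(n)=0$ for every $n$; this is fully rigorous with no appeal to disjointness or to basis expansions. Your preliminary bookkeeping remark, that the surjectivity clause is redundant because the fibre condition already forces $\phi^{-1}(n)\neq\emptyset$ for all $n$, is also correct and cleanly reduces the negation to the single statement used in your contrapositive.
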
  
\begin{proof}
	Suppose $uC_\phi$ is injective. Then 
	\begin{align*}
		uC_\phi(\mychi_n)\neq 0~~\text{for each}~n\geq 1 &\implies u\mychi_{\phi^{-1}(n)}\neq 0~~\text{for each}~n\geq 1.	
	\end{align*}
	This implies that for each $n\geq 1,~\phi^{-1}(n)\neq \emptyset$ and there exists $m_n\in \phi^{-1}(n)$ such that $u(m_n)\neq 0.$\\
	Conversely, suppose for each $n\geq 1$ there exists $m_n\in \phi^{-1}(n)$ such that $u(m_n)\neq 0.$\\ 
	%We claim that $uC_\phi(\mychi_n)\neq 0~~\text{for each}~n\geq 1$
	Then \begin{align*}
		u\mychi_{\phi^{-1}(n)}\neq 0~~\text{for each}~n\geq 1&\implies uC_\phi(\mychi_n)\neq 0~~\text{for each}~n\geq 1.	
	\end{align*}
	Therefore $uC_\phi$ is injective.
\end{proof}
The subsequent corollary is a straightforward outcome of the aforementioned theorem.
\begin{corollary}
	Let $u\in \ell^\infty$ and $\phi:\mathbb{N} \to \mathbb{N}.$ Then $uC_\phi$ is not a left zero divisor if and only if $\phi$ is surjective and for each $n\geq 1$ there exists $m_n\in \phi^{-1}(n)$ such that $u(m_n)\neq 0.$ 
\end{corollary}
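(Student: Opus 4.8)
The plan is to read the corollary as the immediately preceding theorem recast through the equivalence ``$uC_\phi$ is not a left zero divisor $\iff uC_\phi$ is injective.'' The theorem just above asserts that the right-hand conditions — surjectivity of $\phi$ together with the weight condition that each fibre $\phi^{-1}(n)$ contains some $m_n$ with $u(m_n)\neq 0$ — are \emph{exactly} injectivity of $uC_\phi$. So it suffices to prove that $uC_\phi$ fails to be a left zero divisor precisely when it is injective. This is the abstract principle already used inside the proof of Theorem \ref{anurag13}; I would isolate its two directions cleanly and then quote the theorem above to substitute ``injective'' by the stated conditions on $\phi$ and $u$.

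For the forward implication (injective $\Rightarrow$ not a left zero divisor), suppose $uC_\phi$ is injective and let $T\in\mathcal{B}(\ell^p)$ satisfy $uC_\phi\circ T=0$. Evaluating on the standard basis gives $uC_\phi(T\mychi_n)=0$ for every $n$, so injectivity forces $T\mychi_n=0$ for all $n$; since the linear span of $\{\mychi_n\}$ is dense in $\ell^p$ (using $1\leq p<\infty$) and $T$ is continuous, $T=0$. Hence no nonzero $T$ annihilates $uC_\phi$ on the left, i.e.\ $uC_\phi$ is not a left zero divisor.

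For the converse I argue by contrapositive: if $uC_\phi$ is not injective, I exhibit a nonzero $T$ with $uC_\phi\circ T=0$. Choosing $0\neq f_0\in\ker(uC_\phi)$ and setting $T(f)=f(1)f_0$, the map $f\mapsto f(1)$ is a bounded functional on $\ell^p$, so $T$ is bounded; it is nonzero because $T\mychi_1=f_0\neq 0$; and $uC_\phi(Tf)=f(1)\,uC_\phi(f_0)=0$ for all $f$, so $uC_\phi\circ T=0$. Thus $uC_\phi$ is a left zero divisor, which completes the equivalence and, combined with the preceding theorem, yields the corollary.

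I do not anticipate a genuine obstacle here, since the entire content sits in the theorem above together with two short arguments that mirror Theorem \ref{anurag13}. The only points deserving a moment of care are the density--continuity step showing that $T\mychi_n=0$ for all $n$ forces $T=0$ (valid precisely because $p<\infty$) and the routine check that the rank-one operator $T(f)=f(1)f_0$ is bounded and nonzero.
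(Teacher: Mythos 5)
Your proposal is correct and matches the paper's intended route: the paper presents this corollary as an immediate consequence of the preceding injectivity theorem, with the equivalence ``injective $\iff$ not a left zero divisor'' supplied by exactly the two arguments you give (annihilation on the dense span of the basis vectors for one direction, and the rank-one operator $T(f)=f(1)f_0$ onto the kernel for the other), which are the same arguments used in the proof of Theorem \ref{anurag13}. No gaps; your explicit attention to the density step for $1\leq p<\infty$ is a point the paper leaves implicit.
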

%\begin{proof}
%	The following corollary is a simple consequence of the above theorem.
%\end{proof}
%\begin{remark}
%	Let $\phi_1$ and $\phi_2$ be self maps on $\mathbb{N}$ such that either $\phi_1$ is not surjective or $\phi_2$ is not injective, then $C_{\phi_2}\circ C_{\phi_1}$ is a zero divisor in $\mathcal{B}(\ell^p).$ 
%\end{remark}
%\begin{proof}
%	Suppose either $\phi_1$ is not surjective or $\phi_2$ is not injective, then $\phi_1\circ \phi_2$ is not invertible. Hence $C_{\phi_1\circ \phi_2}$ is a zero divisor.
%%	 $C_{\phi_1\circ \phi_2}(f)=f\circ(\phi_1\circ \phi_2)=(f\circ\phi_2)\circ \phi_1=C_{\phi_1}(f\circ \phi_2)=C_{\phi_1}(C_{\phi_2}(f))=(C_{\phi_1}\circ C_{\phi_2})(f).$ 
%But, $C_{\phi_1\circ \phi_2}=C_{\phi_2}\circ C_{\phi_1}.$ Therefore $C_{\phi_2}\circ C_{\phi_1}$ is a zero divisor.
%\end{proof}	
\subsection{ Composition operators on $L^p(\mu)~(1\leq p\leq \infty)$ spaces as zero divisors }\label{sec3.2}\

In this section, we study composition operators on $L^p(\mu)$ spaces, which are zero divisors.
%\begin{theorem}\label{amar2}
%	$C_\phi\in \mathcal{B}(L^p(\mu))$ is injective if and only if $\mu(\phi^{-1}(E))=0$ implies $\mu(E)=0.$	
%\end{theorem}
%\begin{proof}
%	Assume that for any measurable set $E,$ if $\mu(\phi^{-1}(E))=0,$ then it follows that $\mu(E)=0.$ 
%	Let $E$ be a measurable set such that $C_\phi(\mychi_E)=0,$ then $\mychi_{\phi^{-1}(E)}=0.$ Thus $\mu(\phi^{-1}(E))=0$ and hence by our assumption $\mu(E)=0.$ This implies that $\mychi_E=0.$ Hence $C_\phi$ is injective.\\
%	Conversely, suppose $C_\phi$ is injective. Let $E$ be any measurable set satisfying $\mu(\phi^{-1}(E))=0.$ Then, clearly
%	\begin{align*}
%		C_\phi(\mychi_E)&=\mychi_{\phi^{-1}(E)}\\&=0
%	\end{align*}
%	Hence $\mychi_E=0$ by injectivity of $C_\phi.$ Therefore $\mu(E)=0.$
%\end{proof}
\begin{theorem}
	Suppose $\phi:X \to X$ is a non-singular measurable transformation such that $\frac{d\mu\phi^{-1}}{d\mu}$ is bounded away from zero on the complement of  $Z(\frac{d\mu\phi^{-1}}{d\mu}).$ Then $C_\phi\in \mathcal{B}(L^2(\mu))$ is surjective if and only if $\phi$ is injective almost everywhere.
\end{theorem}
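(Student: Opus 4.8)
The plan is to reduce surjectivity of $C_\phi$ on $L^2(\mu)$ to the solvability of the equation $f\circ\phi=g$, controlling all norms through the Radon--Nikodym derivative $h:=\frac{d\mu\phi^{-1}}{d\mu}$. The starting point is the change-of-variables identity for the image measure $\mu\phi^{-1}$,
\[
\|C_\phi f\|_2^2=\int_X |f\circ\phi|^2\,d\mu=\int_X |f|^2\,d(\mu\phi^{-1})=\int_X |f|^2\,h\,d\mu,
\]
valid because $\mu\phi^{-1}\ll\mu$. Writing $\delta:=\operatorname*{ess\,inf}_{\,X\setminus Z(h)}h$, the hypothesis gives $\delta>0$. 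I would also record the elementary but crucial fact $\mu(\phi^{-1}(Z(h)))=\mu\phi^{-1}(Z(h))=\int_{Z(h)}h\,d\mu=0$, so that $\phi(x)\notin Z(h)$ for $\mu$-a.e.\ $x$.

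For sufficiency, assume $\phi$ is injective a.e. Given $g\in L^2(\mu)$, I would define $f(y)=g(\phi^{-1}(y))$ for $y\in\mathcal R(\phi)\setminus Z(h)$ and $f(y)=0$ elsewhere, the a.e.\ inverse $\phi^{-1}$ existing on $\mathcal R(\phi)$ by a.e.\ injectivity. Since $\phi(x)\in\mathcal R(\phi)\setminus Z(h)$ for a.e.\ $x$ and $\phi^{-1}(\phi(x))=x$ a.e., one gets $f\circ\phi=g$ a.e.; the values of $f$ on $Z(h)$ are irrelevant precisely because $\phi$ lands outside $Z(h)$ almost everywhere. Finally the identity above yields $\|g\|_2^2=\int_X|f|^2h\,d\mu\ge\delta\int_X|f|^2\,d\mu=\delta\|f\|_2^2$, so $f\in L^2(\mu)$ with $\|f\|_2\le\delta^{-1/2}\|g\|_2$ and $C_\phi f=g$. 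Hence $C_\phi$ is surjective.

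For necessity I would argue by contraposition: if $\phi$ is not injective a.e., I produce a $g\notin\mathcal R(C_\phi)$. Every element of $\mathcal R(C_\phi)$ has the form $f\circ\phi$ and is therefore constant along the fibres $\phi^{-1}(y)$. Failure of a.e.\ injectivity means the ``folding'' set carries positive measure, so I would extract two disjoint sets $A_1,A_2$ of positive measure with $\phi(A_1)$ and $\phi(A_2)$ overlapping on a set $E$ that is non-null for the image measure. Taking $g=\mychi_{A_1}$, any solution of $f\circ\phi=g$ would force $f=1$ on $\phi(A_1)$ and $f=0$ on $\phi(A_2)$, hence two different values on $E$, a contradiction on a set of positive measure. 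Thus $g\notin\mathcal R(C_\phi)$ and $C_\phi$ fails to be surjective.

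The main obstacle is entirely measure-theoretic and lives in the two selection steps: the existence of a measurable a.e.\ inverse $\phi^{-1}$ on $\mathcal R(\phi)$ in the sufficiency part, and the extraction of the colliding sets $A_1,A_2,E$ from the mere failure of a.e.\ injectivity in the necessity part. Both are standard when $(X,\Omega,\mu)$ is a countably generated (e.g.\ standard Borel) $\sigma$-finite space, via Lusin--Novikov type measurable-selection and decomposition theorems, and I would either invoke these or add the corresponding regularity hypothesis. Once the selections are available, the norm bookkeeping is governed solely by $h$ being bounded away from $0$ on $X\setminus Z(h)$, which is exactly what makes the constructed preimage square-integrable; note that no condition such as $h>0$ a.e.\ is required, since redefining $f$ to vanish on $Z(h)$ leaves $f\circ\phi$ unchanged almost everywhere.
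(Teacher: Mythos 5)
Your proposal is correct in substance, and while the necessity direction coincides with the paper's (contraposition; two sets $E_1,E_2$ of positive measure with overlapping images; the indicator of one cannot be of the form $f\circ\phi$, since such functions are constant on fibres), your sufficiency direction takes a genuinely different route. The paper first invokes the closed-range theorem of Singh and Kumar (Theorem 2.2 of the cited reference \cite{RKAshok}) --- this is precisely where the hypothesis that $h=\frac{d\mu\phi^{-1}}{d\mu}$ is bounded away from zero off $Z(h)$ enters --- and then only needs density: for each measurable $E$ with $0<\mu(E)<\infty$ it exhibits $\chi_E$ (up to a null set) as $C_\phi\left(\chi_{\phi(F)}\right)$, so the range contains a total family of indicators, and closedness of the range yields surjectivity. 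You instead build a preimage of an arbitrary $g\in L^2(\mu)$ directly, and the same hypothesis enters through the change-of-variables identity $\|C_\phi f\|_2^2=\int_X|f|^2h\,d\mu$, giving the explicit a priori bound $\|f\|_2\le\delta^{-1/2}\|g\|_2$. This buys you a self-contained argument with a quantitative solution bound (no appeal to an external closed-range theorem), and your observation that $\mu\bigl(\phi^{-1}(Z(h))\bigr)=\int_{Z(h)}h\,d\mu=0$, so that values of $f$ on $Z(h)$ are irrelevant, is a genuinely useful point that the paper's proof does not isolate. The measure-theoretic caveat you flag --- measurability of the a.e.\ inverse on $\mathcal{R}(\phi)$, needing Lusin--Novikov-type selection on a countably generated or standard Borel space --- is real, but note that the paper's proof silently incurs the same debt: it uses $f=\chi_{\phi(F)}$, and the image $\phi(F)$ of a measurable set under a measurable map need not be measurable without exactly such regularity assumptions. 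So your version is, if anything, more honest about the hypotheses under which both arguments are complete.
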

\begin{proof}
	Let $\phi:X \to X$ be a non-singular measurable transformation such that $\frac{d\mu\phi^{-1}}{d\mu}$ is bounded away from zero on the complement of zero-set of  $\frac{d\mu\phi^{-1}}{d\mu}.$ Then by Theorem 2.2 \cite{RKAshok} range of $C_\phi$ is closed in $L^2(\mu)$. \\
	Suppose $\phi$ is injective almost everywhere. Let $E$ be any measurable set with $0<\mu(E)<\infty.$ Then there exists a measurable set $E_0$ with $\mu(E_0)=0$ and $\mu(E)=\mu(E\cap(X-E_0)).$ \\Let $F=E\cap(X-E_0)$ and $f=\mychi_{\phi(F)}.$ Then $C_\phi(f)=\mychi_F=\mychi_E.$ Thus for each measurable set $E$, with $0<\mu(E)<\infty$, there exists $f\in L^2(\mu)$ such that $C_\phi(f)=\mychi_E.$ This proves that $\mathcal{R}(C_\phi)$ is dense in $L^2(\mu)$. Consequently, $C_\phi$ is surjective.\\
	Conversely, suppose $C_\phi$ is surjective. To show $\phi$ is injective almost everywhere. \\If not, then there exist two distinct measurable sets $E_1$ and $E_2$ with $0<\mu(E_1)<\infty,~0<\mu(E_2)<\infty$ such that $\mu(E_1-E_2)>0,~\mu(E_2-E_1)>0$ and $\phi(E_1)=\phi(E_2).$ Let $g=\mychi_{E_1}\in L^2(\mu).$ Since $C_\phi$ is surjective, hence there exists $f\in L^2(\mu)$ such that $$C_\phi(f)=\mychi_{E_1} \implies f\circ\phi=\mychi_{E_1}.$$ Since $\phi(E_1)=\phi(E_2),$ therefore \begin{align*}
		1&=f\circ \phi(E_1)\\&=f\circ \phi(E_2)\\&=\mychi_{E_1}(E_2)\\&=0 \quad on ~E_2-E_1
	\end{align*}
	which is a contradiction because $\mu(E_2-E_1)>0$. Therefore $\phi$ must be injective almost everywhere.
\end{proof}
\begin{theorem}
	$C_\phi\in \mathcal{B}(L^p(\mu))$	is a left zero divisor if and only if there exists a measurable set $E_0$ with $0<\mu(E_0)<\infty$ such that $\mu(\phi^{-1}(E_0))=0.$
\end{theorem}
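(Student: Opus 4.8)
The plan is to reduce the Banach-algebra condition to a purely operator-theoretic one and then translate that into the stated measure-theoretic criterion. By the definition used here, $C_\phi$ is a left zero divisor in $\mathcal{B}(L^p(\mu))$ precisely when $C_\phi\circ T=0$ for some nonzero $T\in\mathcal{B}(L^p(\mu))$. My first step is to establish the general fact that, for any bounded operator $S$ on a Banach space $V$, $S$ is a left zero divisor in $\mathcal{B}(V)$ if and only if $S$ fails to be injective. Indeed, if $\ker S\neq\{0\}$, pick $0\neq f_0\in\ker S$ and a nonzero bounded linear functional $\Lambda$ (available by the Hahn--Banach theorem) and set $T(f)=\Lambda(f)f_0$; this $T$ is bounded, nonzero, and satisfies $S\circ T=0$. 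Conversely, if $S$ is injective then $S\circ T=0$ forces $\mathcal{R}(T)\subseteq\ker S=\{0\}$, so $T=0$. Thus it suffices to characterize the non-injectivity of $C_\phi$, exactly as in the $\ell^p$ arguments of Theorem \ref{anurag13}.

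Next I would translate non-injectivity into the measure-theoretic statement. For the ``if'' direction, suppose there is a measurable set $E_0$ with $0<\mu(E_0)<\infty$ and $\mu(\phi^{-1}(E_0))=0$. Then $\mychi_{E_0}$ is a nonzero element of $L^p(\mu)$, while $C_\phi(\mychi_{E_0})=\mychi_{E_0}\circ\phi=\mychi_{\phi^{-1}(E_0)}=0$ almost everywhere; hence $C_\phi$ is not injective and therefore a left zero divisor. For the ``only if'' direction, assume $C_\phi$ is not injective and choose $0\neq f\in L^p(\mu)$ with $f\circ\phi=0$ a.e. Put $A=\{y:f(y)\neq 0\}$. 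Since $f\neq 0$ in $L^p(\mu)$ we have $\mu(A)>0$, and the identity $\{x:f(\phi(x))\neq 0\}=\phi^{-1}(A)$ together with $f\circ\phi=0$ a.e. gives $\mu(\phi^{-1}(A))=0$.

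The remaining issue, and the one place where the hypotheses on the space must be used, is to arrange the finiteness $\mu(E_0)<\infty$. Here I invoke $\sigma$-finiteness of $\mu$: writing $X=\bigcup_n X_n$ with $\mu(X_n)<\infty$, the set $A=\bigcup_n (A\cap X_n)$ has positive measure, so some $E_0:=A\cap X_n$ satisfies $0<\mu(E_0)<\infty$, and since $\phi^{-1}(E_0)\subseteq\phi^{-1}(A)$ we still have $\mu(\phi^{-1}(E_0))=0$. Combining the two reductions yields the theorem. I expect the only genuinely delicate points to be bookkeeping ones: checking that $C_\phi$ is well defined on equivalence classes, which uses the standing assumption $\mu_\phi\ll\mu$ so that $\mu$-null sets pull back to $\mu$-null sets, and confirming that $\{y:f(y)\neq 0\}$ is measurable and behaves correctly under $\phi^{-1}$.
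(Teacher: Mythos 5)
Your proof is correct, and its skeleton --- left zero divisor in $\mathcal{B}(L^p(\mu))$ $\iff$ $C_\phi$ non-injective $\iff$ the measure-theoretic condition --- is the same as the paper's, but your realization of it differs in two useful ways. First, where the paper handles the forward direction by invoking its Theorem \ref{amar1} (a projection onto $[\mychi_{E_0}]$) and the converse by citing an injectivity criterion labelled ``Theorem \ref{amar2}'' --- a reference that does not actually appear anywhere in the paper, so the published converse rests on an unproved ingredient --- you prove both ingredients from scratch: your general lemma that a bounded operator $S$ on any Banach space is a left zero divisor iff $\ker S\neq\{0\}$, via the rank-one operator $T=\Lambda(\cdot)f_0$ with $\Lambda$ from Hahn--Banach, subsumes the paper's projection construction and is cleaner than building a complemented subspace (your $T$ also avoids the paper's slightly garbled computation ``for $f\notin M$, $T(f)=0$''). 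Second, you make explicit a step the paper glosses over: the paper's converse asserts that the nonexistence of such an $E_0$ forces $\mu(\phi^{-1}(E))=0\Rightarrow\mu(E)=0$ for \emph{all} measurable $E$, which when $\mu(E)=\infty$ requires precisely your $\sigma$-finite trimming $E_0=A\cap X_n$; you also flag the well-definedness of $C_\phi$ on equivalence classes via $\mu_\phi\ll\mu$, which the paper leaves tacit. In short, your argument buys self-containedness and repairs the dangling reference, at the modest cost of proving a general lemma the paper prefers to quote; the paper's route, when its citations are supplied, is essentially the same mathematics.
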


\begin{proof}
	Suppose there exists a measurable set $E_0$ with $0<\mu(E_0)<\infty$ satisfying $\mu(\phi^{-1}(E_0))=0.$ Then for $u=1,$ Theorem $\ref{amar1}$ implies that $C_\phi$ is a left zero divisor.\\
	Conversely, suppose there does not exist any measurable set $E_0$ with $0<\mu(E_0)<\infty$ satisfying $\mu(\phi^{-1}(E_0))=0.$ Then for each measurable set $E$ with $\mu(\phi^{-1}(E))=0$ implies $\mu(E)=0.$ Hence by Theorem \ref{amar2}, $C_\phi$ is injective. Let $C_\phi \circ T=0.$ Then for each measurable set $E,$ injectivity of $C_\phi$ implies that $T(\mychi_E)=0.$ This implies that $T=0.$ Hence $C_\phi$ can not be left zero divisor.\\This completes the proof.
\end{proof}
\begin{theorem}\label{amar1}
	Let $uC_\phi \in \mathcal{B}(L^p(\mu)).$ Then it is a left zero divisor if there exists a measurable set $E_0$ with $0<\mu(E_0)<\infty$ such that $\phi^{-1}(E_0)=A\cup B$ with $u=0$ a.e. on $A$ and $\mu(B)=0.$ 	
\end{theorem}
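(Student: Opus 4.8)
The plan is to reuse the projection technique of Theorem \ref{HCsir1}: I will exhibit a single nonzero element of $L^p(\mu)$ that is annihilated by $uC_\phi$, and then manufacture a nonzero bounded operator $T$ whose range is contained in the span of that element, so that $uC_\phi\circ T=0$.

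First I would set $f_0=\mychi_{E_0}$. Since $0<\mu(E_0)<\infty$, the function $f_0$ is a nonzero member of $L^p(\mu)$ for every $1\le p\le\infty$. Next I would compute the action of $uC_\phi$ on $f_0$, using the measure-theoretic identity $\mychi_{E_0}\circ\phi=\mychi_{\phi^{-1}(E_0)}$:
$$(uC_\phi)f_0=u\,\mychi_{\phi^{-1}(E_0)}=u\,\mychi_{A\cup B}.$$
By hypothesis $u=0$ almost everywhere on $A$, while $\mu(B)=0$, so the product $u\,\mychi_{A\cup B}$ vanishes $\mu$-almost everywhere. Hence $(uC_\phi)f_0=0$ as an element of $L^p(\mu)$.

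Then I would produce the operator $T$. Let $M=[f_0]$, which is a one-dimensional, hence closed, subspace of $L^p(\mu)$. Because a finite-dimensional subspace is always complemented — one extends a norm-one functional on $M$ to all of $L^p(\mu)$ via the Hahn–Banach theorem — there is a closed subspace $N$ with $L^p(\mu)=M\oplus N$. Let $T$ be the projection onto $M$ along $N$; it is a nonzero bounded linear operator with $\mathcal{R}(T)=M=[f_0]$. Consequently, for every $f\in L^p(\mu)$ we may write $Tf=c_f f_0$ for some scalar $c_f$, and therefore
$$(uC_\phi\circ T)(f)=c_f\,(uC_\phi)f_0=0.$$
Thus $uC_\phi\circ T=0$ with $T\neq 0$, which is precisely the assertion that $uC_\phi$ is a left zero divisor.

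The computation itself is routine; the only point meriting care is the boundedness and nontriviality of $T$, that is, the fact that the one-dimensional subspace $[f_0]$ is complemented in $L^p(\mu)$ — this is exactly where Hahn–Banach enters, just as in Theorems \ref{HCsir1} and \ref{Anurag31}. Everything else reduces to the pullback identity $\mychi_{E_0}\circ\phi=\mychi_{\phi^{-1}(E_0)}$ together with the two defining features of the decomposition $\phi^{-1}(E_0)=A\cup B$, namely $u=0$ a.e.\ on $A$ and $\mu(B)=0$.
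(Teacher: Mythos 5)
Your proposal is correct and follows essentially the same route as the paper: take $f_0=\mychi_{E_0}$, verify $(uC_\phi)f_0=u\,\mychi_{A\cup B}=0$ a.e.\ from the hypotheses $u=0$ a.e.\ on $A$ and $\mu(B)=0$, and compose $uC_\phi$ with the Hahn--Banach projection onto the one-dimensional subspace $[f_0]$. If anything, your writing $Tf=c_f f_0$ and deducing $(uC_\phi\circ T)(f)=c_f\,(uC_\phi)f_0=0$ for \emph{all} $f$ is tidier than the paper's case split, which loosely asserts $(uC_\phi\circ T)(f)=0$ for $f\notin M$ without noting that $T$ maps such $f$ into $[f_0]$ rather than to $0$.
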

\begin{proof}
	Suppose there exists a measurable set $E_0$ with $0<\mu(E_0)<\infty$ such that $\phi^{-1}{(E_0)}=A\cup B,$ where $u=0$ a.e. on $A$ and $\mu(B)=0.$ %Let $f_0=\mychi_{E_0}$ and $\mathcal{B}$ denote the Hamel basis containing $f_0$ of $L^p(\mu).$
Let $f_0=\mychi_{E_0}$ and $M=[f_0].$
Let $T$ denote the projection operator on $M.$
Clearly, $0\neq T$ is a bounded linear operator on $L^p(\mu)$. Now for the measurable set $E_0,$
	\begin{align*}
		(uC_\phi\circ T)(\mychi_{E_0})&=uC_\phi (T(\mychi_{E_0}))\\
		&=uC_\phi (\mychi_{E_0})\\
		&=u\mychi_{\phi^{-1}(E_0)}\\
		&=0 \quad(\text{since}~ \phi^{-1}(E_0)=A\cup B ~\text{and}~ u=0 ~\text{a.e. on}~ A~ \text{and}~ \mu(B)=0)
	\end{align*}
and for $f\notin M$\begin{align*}
	(uC_\phi \circ T)(f)&=uC_\phi (T(f))\\&=0.
\end{align*}
%	Thus $(uC_\phi\circ T)(f)=0$ for each $f\in L^p(\mu).$
	Hence $uC_\phi\circ T=0.$ Therefore $uC_\phi$ is a left zero divisor in $\mathcal{B}(L^p(\mu)).$	
\end{proof}

\subsection{Topological divisors of zero in $C(X)$}\label{sec3.3}\

Consider $X$ as a compact Hausdorff space, and let $C(X)$ represent the Banach algebra consisting of all continuous complex-valued functions on X. In \cite{Azarpanah}, F. Azarpanah, D. Esmaeilvandi and A.R. Salehi characterized the regular elements in $C(X).$
%	Let $X$ be a compact Hausdorff space and denote the Banach algebra of all complex valued continuous functions on $X$ by $C(X).$ 
With this context, the following results have been obtained:

\begin{theorem}\label{CHS TDZ}
	Let $f\in C(X).$ Then $f$ is a TDZ if and only if it is singular in $C(X).$    
\end{theorem}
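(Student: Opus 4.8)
The plan is to first translate the algebraic hypothesis ``singular'' into a condition on the zero set, and then treat the two implications separately. Since $X$ is compact Hausdorff, $C(X)$ is a commutative unital Banach algebra under the sup norm $\|\cdot\|_\infty$, with identity the constant function $1$; commutativity means left and right TDZ coincide, so it suffices to produce a sequence $\{z_n\}$ with $\|z_n\|_\infty = 1$ and $\|f z_n\|_\infty \to 0$. I would begin by recording the standard characterization of regular elements (the one referenced from \cite{Azarpanah}): $f$ is invertible in $C(X)$ if and only if $Z(f)=\emptyset$. Indeed, if $f$ never vanishes then $|f|$ attains a positive minimum on the compact set $X$, so $1/f \in C(X)$ is a genuine inverse; while if $f(x_0)=0$ then $(fg)(x_0)=0 \neq 1$ for every $g$. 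Hence ``$f$ singular'' is equivalent to ``$Z(f)\neq\emptyset$'', and it is enough to prove that $f$ is a TDZ if and only if $Z(f)\neq\emptyset$.

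For the forward implication I would argue by contraposition, using a fact valid in any unital Banach algebra: if $f$ is regular with inverse $f^{-1}$ and $\{z_n\}$ is any sequence with $f z_n \to 0$, then $z_n = f^{-1}(f z_n) \to 0$, which is incompatible with $\|z_n\|_\infty = 1$. Thus a regular $f$ cannot be a TDZ, i.e.\ every TDZ is singular. This direction is routine.

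The substance is the reverse implication, where I must construct the approximating sequence. Fix a point $x_0 \in Z(f)$. For each $n$ set $U_n = \{x \in X : |f(x)| < 1/n\}$, which is open and contains $x_0$ by continuity of $f$ (note $f(x_0)=0<1/n$, so $x_0\in U_n$ genuinely). Since a compact Hausdorff space is normal, and the disjoint sets $\{x_0\}$ and $X\setminus U_n$ are closed, Urysohn's lemma furnishes a continuous function $z_n : X \to [0,1]$ with $z_n(x_0)=1$ and $z_n \equiv 0$ on $X\setminus U_n$. Then $\|z_n\|_\infty = 1$ (attained at $x_0$), while $z_n$ is supported in $U_n$, so $|f(x)z_n(x)| \le (1/n)|z_n(x)| \le 1/n$ for every $x\in X$, whence $\|f z_n\|_\infty \le 1/n \to 0$. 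Therefore $f$ is a TDZ.

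The only genuine obstacle is the construction of the $z_n$: I need bump functions of sup norm exactly $1$ concentrated on the shrinking sets where $|f|$ is small, and it is precisely the normality of a compact Hausdorff space (via Urysohn's lemma) that makes this possible. This is what upgrades the interval case of Theorem \ref{anurag20} to an arbitrary compact Hausdorff $X$, replacing the explicit piecewise-linear bumps available on $[a,b]$ by Urysohn functions.
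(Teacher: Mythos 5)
Your proposal is correct and follows essentially the same route as the paper: the easy direction via the standard regular-elements-are-not-TDZ argument, and the substantive direction by picking $x_0\in Z(f)$ (using that singular means vanishing somewhere), taking the open sets $U_n=\{x:|f(x)|<1/n\}$, and applying Urysohn's lemma to the closed pair $\{x_0\}$ and $X\setminus U_n$ to get norm-one bumps $z_n$ with $\|fz_n\|\le 1/n$. Your write-up merely makes explicit two steps the paper leaves implicit (the equivalence of singularity with $Z(f)\neq\emptyset$, and the contraposition in the forward direction); otherwise the constructions coincide.
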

\begin{proof}
	If $f$ is a TDZ in $C(X)$ then it is clearly a singular element.\\ Conversely, suppose that $f\in C(X)$ is a singular. Then there exists $x_0\in X$ such the $f(x_0)=0.$ Now, by continuity of $f,$ for each $n\geq 1,$ there exists an open set $E_n$ containing $x_0$ such that $$|f(x)|<\frac{1}{n}~ \text{for each}~ x\in E_n.$$ Let $A_n=E^c_n$ and $B_n=\{x_0\}.$ By Urysohn’s Lemma, there exists a function $f_n:X \to [0,1]$ such that $$f_n(B_n)=1 ~\text{and}~ f_n(A_n)=0.$$ Clearly $\|f_n\|=1$ and $\|ff_n\|<\frac{1}{n}.$ This implies that $\|ff_n\| \to 0$ as $n \to \infty.$ Hence $f$ is a TDZ.
\end{proof}
\begin{theorem}\label{M_h Compact}
	Let $h\in C(X)$ and $M_h:C(X) \to C(X)$ be the multiplication operator defined as $M_h f=h f.$ Then $M_h$ is a TDZ in $\mathcal{B}(C(X))$ if and only $h$ is a TDZ in $C(X)$. 
\end{theorem}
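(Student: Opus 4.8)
The plan is to prove the two implications separately, exploiting that $C(X)$ is a commutative unital Banach algebra under the supremum norm (so that the left and right TDZ notions coincide in $C(X)$), while $\mathcal{B}(C(X))$ is unital with identity $I$ but non-commutative.

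For the sufficiency ($h$ a TDZ $\implies M_h$ a TDZ) I would start from a sequence $\{f_n\}\subset C(X)$ with $\|f_n\|=1$ and $\|hf_n\|\to 0$, guaranteed by the hypothesis. Fix any point $x_0\in X$ and define rank-one operators $T_n\colon C(X)\to C(X)$ by $T_n(g)=g(x_0)f_n$. Since the evaluation functional $g\mapsto g(x_0)$ has norm $1$ on $C(X)$ (as $|g(x_0)|\le\|g\|$ with equality for $g\equiv 1$), each $T_n$ is bounded with $\|T_n\|=\sup_{\|g\|=1}|g(x_0)|\,\|f_n\|=1$. Computing the product in $\mathcal{B}(C(X))$ gives $(M_hT_n)(g)=g(x_0)\,hf_n$, whence $\|M_hT_n\|=\|hf_n\|\to 0$. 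By the definition of TDZ this exhibits $M_h$ as a TDZ in $\mathcal{B}(C(X))$. The only care needed is the verification of these two norm identities, both of which reduce to the evaluation functional having unit norm.

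For the necessity ($M_h$ a TDZ $\implies h$ a TDZ) I would argue through singularity rather than by manipulating the operator sequence directly. First I would record the standard fact that a TDZ in a unital Banach algebra cannot be invertible: if $M_h$ had a two-sided inverse $S$ and $\|T_n\|=1$ with, say, $M_hT_n\to 0$, then $T_n=S(M_hT_n)\to 0$, contradicting $\|T_n\|=1$, and symmetrically if $T_nM_h\to 0$. Hence $M_h$ is singular in $\mathcal{B}(C(X))$. Next I would establish that $M_h$ is invertible in $\mathcal{B}(C(X))$ if and only if $h$ is regular in $C(X)$: if $h$ is regular then $1/h\in C(X)$ and $M_{1/h}$ is a two-sided inverse of $M_h$; conversely, if $h$ is singular then $Z(h)\neq\emptyset$, and for $x_0\in Z(h)$ the equation $hg=1$ has no solution (since $h(x_0)g(x_0)=0\neq 1$), so $M_h$ fails to be surjective and is therefore singular. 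Consequently $M_h$ singular forces $h$ singular, and Theorem \ref{CHS TDZ} then yields that $h$ is a TDZ in $C(X)$.

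I expect no serious obstacle here; this is essentially the $C(X)$-analogue of Theorem \ref{Anurag10}. The only mild subtlety is that $\mathcal{B}(C(X))$ is non-commutative, so in the necessity part one must rule out invertibility for both the left and right TDZ alternatives, and in the sufficiency part one must confirm that the constructed $T_n$ witness the TDZ condition on the correct side, namely $M_hT_n\to 0$. Both points are dispatched by the computations indicated above.
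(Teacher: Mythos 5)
Your proposal is correct, and its overall skeleton matches the paper's: sufficiency by exhibiting norm-one operators $T_n$ with $\|M_hT_n\|=\|hf_n\|\to 0$, and necessity by reducing to regularity and invoking Theorem \ref{CHS TDZ}. The one genuine difference is your choice of witnesses in the sufficiency direction. The paper takes $T_n=M_{f_n}$, i.e.\ it pushes the sequence $\{f_n\}$ through the isometric embedding $u\mapsto M_u$ of $C(X)$ into $\mathcal{B}(C(X))$, so that both norm identities $\|M_{f_n}\|=\|f_n\|=1$ and $\|M_hM_{f_n}\|=\|hf_n\|$ are immediate and the TDZ property is witnessed entirely inside the commutative subalgebra of multiplication operators. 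You instead use the rank-one operators $T_n(g)=g(x_0)f_n$ built from a point evaluation; this costs you the (easy) verification that the evaluation functional has norm one, but buys a small bonus the paper's proof does not give, namely that the witnessing sequence can be taken to consist of compact (indeed rank-one) operators. In the necessity direction you are actually slightly more complete than the paper: the paper asserts without proof that a regular element of $\mathcal{B}(C(X))$ cannot be a TDZ, whereas you supply the standard argument ($T_n=S(M_hT_n)\to 0$ contradicting $\|T_n\|=1$, and symmetrically for the other side), correctly handling both the left and right alternatives in the non-commutative algebra; your auxiliary remark that $h$ singular forces $Z(h)\neq\emptyset$ (hence $M_h$ non-surjective) is true by compactness of $X$ but is not needed for the implication, since the contrapositive of ``$h$ regular $\implies M_{1/h}$ inverts $M_h$'' already suffices.
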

\begin{proof}
	Suppose $h \in C(X)$ be a TDZ. Let $\{h_n\}_{n=1}^{\infty}$ be a sequence in $C(X)$ with $\Vert h_n\Vert=1 \ \forall \  n\in \mathbb{N}$ such that $\Vert h h_n\Vert \to 0$ as $n\to \infty.$\\ For each $n\geq1,$ define $M_{h_n}:C(X) \to C(X)~ \text{by}~ M_{h_n}(f)=h_n f.$ Observe that $$\Vert M_{h_n}\Vert=\Vert h_n\Vert=1 \ \forall \ n\in \mathbb{N} ~\text{and}~ M_hM_{h_n}(f)=h h_n f.$$ Then $\Vert M_hM_{h_n}\Vert=\Vert h h_n\Vert \to 0 ~\text{as}~ n\to \infty.$ Therefore, $M_h$ is a TDZ in $\mathcal{B}(C(X))$. \\
	Conversely, suppose $h \in C(X)$ is not a TDZ in $C(X)$. Then $h$ is a regular element in $C(X)$. Therefore there exists a $g \in C(X)$ such that $h \cdot g=g \cdot h=1.$  This implies that $M_{h g}=\text{I}.$ Hence $M_{h g}=M_{h}M_{g}=M_{g}M_{h}=\text{I}.$ This shows that $M_h$ is regular element in $\mathcal{B}(C(X))$. Hence $M_h$ can not be a TDZ.\\
\end{proof}
\begin{theorem}
	Let $\phi:X\to X$ be a continuous map and $C_\phi:C(X) \to C(X)$ be the composition operator defined as $C_\phi f=f\circ \phi.$ Then $C_\phi$ is a bounded linear operator on $C(X)$ with $\|C_\phi\|=1.$\\
	Further, the following statements hold.\\
	(1) If $\phi$ is bijective, then $C_\phi$ is invertible in $\mathfrak{B}(C(X)).$\\
	(2) $C_\phi$ is injective if and only if $\phi$ is surjective.\\
	(3) $C_\phi$ is surjective if and only if $\phi$ is injective.
\end{theorem}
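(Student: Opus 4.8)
The plan is to dispatch the norm statement first and then prove each of the three equivalences by establishing one implication directly and the other by contraposition, the whole argument resting on the standing hypothesis that $X$ is compact Hausdorff (hence normal).

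First I would record that $C_\phi$ is linear, which is immediate since composition with the fixed map $\phi$ respects addition and scalar multiplication. For boundedness, for any $f\in C(X)$ we have $\|C_\phi f\|=\sup_{x\in X}|f(\phi(x))|\le\sup_{y\in X}|f(y)|=\|f\|$, because $\phi(X)\subseteq X$; hence $\|C_\phi\|\le 1$. Evaluating on the constant function $1$ gives $C_\phi 1=1$, so $\|C_\phi\|\ge 1$, and therefore $\|C_\phi\|=1$.

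For $(1)$, the key point is that a continuous bijection from a compact space onto a Hausdorff space is a homeomorphism; thus $\phi^{-1}$ is continuous, $C_{\phi^{-1}}$ is a bounded operator on $C(X)$, and a direct computation gives $C_\phi C_{\phi^{-1}}=C_{\phi^{-1}}C_\phi=I$, so $C_\phi$ is invertible. For the forward implication of $(2)$, if $\phi$ is surjective and $C_\phi f=0$, then $f$ vanishes on $\phi(X)=X$, so $f=0$ and $C_\phi$ is injective; conversely, if $\phi$ is not surjective, then $\phi(X)$ is a proper closed subset of $X$, being the continuous image of a compact set inside a Hausdorff space, and Urysohn's Lemma yields a nonzero $f\in C(X)$ vanishing on $\phi(X)$, whence $C_\phi f=0$ and $C_\phi$ is not injective. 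For $(3)$, if $\phi$ is not injective, choose $x_1\ne x_2$ with $\phi(x_1)=\phi(x_2)$; then every element of the range satisfies $C_\phi f(x_1)=C_\phi f(x_2)$, while Urysohn's Lemma supplies $g\in C(X)$ with $g(x_1)\ne g(x_2)$, so $g\notin\mathcal{R}(C_\phi)$ and $C_\phi$ is not surjective.

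The main obstacle is the forward direction of $(3)$. Assuming $\phi$ injective, I would regard $\phi$ as a homeomorphism onto the closed set $\phi(X)$, so that for a given $g\in C(X)$ the function $g\circ\phi^{-1}$ is well defined and continuous on $\phi(X)$; applying the Tietze extension theorem, valid since $X$ is normal, extends it to some $f\in C(X)$, and then $C_\phi f=(g\circ\phi^{-1})\circ\phi=g$, which proves surjectivity. This extension step is the only place where a theorem genuinely deeper than Urysohn's Lemma is required, and the delicate point is ensuring that $\phi^{-1}$ is continuous on $\phi(X)$, which once more relies on the compactness of $X$.
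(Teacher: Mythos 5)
Your proposal is correct and follows essentially the same route as the paper: the homeomorphism argument for (1), a Urysohn-type separation (the paper invokes complete regularity) for the converse of (2), the Tietze extension theorem for the forward direction of (3), and the evaluation-agreement obstruction for its converse. If anything, your write-up is slightly more complete, since you also verify $\|C_\phi\|=1$ (which the paper states but does not prove) and you make explicit that compactness of $X$ is what guarantees $\phi^{-1}$ is continuous on the closed set $\phi(X)$ before applying Tietze.
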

\begin{proof}
	%	The proof of the above theorem is a simple consequence of the fact that 
	(1) If $\phi:X\to X$ is bijective and continuous, then it is a homeomorphism. Hence $C^{-1}_\phi=C_{\phi^{-1}}.$\\
	(2) Suppose $\phi$ is surjective and let $C_\phi f=0.$ Then $(f\circ\phi)(x)=0~\forall~x\in X.$ Now, surjectivity of $\phi$ implies that $f(x)=0~\forall~x\in X.$ Therefore $f=0.$ This implies $C_\phi$ is injective.\\
	Conversely, suppose $\phi$ is not surjective. Then $\phi(X)$ is a proper closed and compact subset of $X.$ Since $X$ is completely regular, therefore for $x_0\in X\setminus \phi(X)$ there exists a non-zero continuous function $f_0:X\to [0,1]$ such that $$f_0(x_0)=1 ~\text{and}~ f_0(\phi(X))=0.$$
	This implies that $C_\phi f_0=0.$ Since $f_0\neq 0,$ therefore $C_\phi$ is not injective. This completes the proof.\\
	(3) Suppose $\phi$ is injective. Let $g\in C(X)$ and $\phi^{-1}:\phi(X)\to X.$ Define $f=g\circ \phi^{-1}.$ By Tietz extension theorem, $f$ can be continuously extended on $X.$ Clearly, \begin{align*}
		C_\phi(f)&=f\circ \phi \\&=g\circ \phi^{-1}\circ \phi\\&=g.
	\end{align*}
	Hence $C_\phi$ is surjective.\\
	Conversely, suppose $\phi$ is not injective. Then there exists two distinct elements $x_1, x_2\in X$ such that $\phi(x_1)=\phi(x_2)$ and a continuous function $f_0:X\to [0, 1]$ such that $$f_0(x_1)=0,~f_0(x_2)=1.$$
	We, claim that $f_0\notin \mathcal{R}(C_\phi).$ Observe that if $g\in \mathcal{R}(C_\phi),$ then $g=f\circ \phi$ for some $f\in C(X).$\\
	Then, \begin{align*}
		g(x_1)=f\circ \phi(x_1)=f\circ \phi(x_2)=g(x_2).
	\end{align*}
	This implies that $f_0\notin \mathcal{R}(C_\phi).$ Hence $C_\phi$ is not surjective.
\end{proof}
%\begin{theorem}
%	Let $X$ be a compact Hausdorff space and $h:X \to X$ be a continuous map. Let $M_h:C(X) \to C(X)$ be the multiplication operator defined as $M_h f=h f.$ Then $M_h$ is a polynomially TDZ in $\mathcal{B}(C(X))$.   
%\end{theorem}
%\begin{proof}
%	Let $x_0 \in X$ and $p(z)=z-h(x_0).$ Clearly $p(h(x))\in C(X)$ and $p(h(x_0))=0.$ Hence by Theorem $\ref{CHS TDZ},~ p(h(x))$ is a TDZ in $C(X)$ which is also a singular element. For each $f\in C(X)$ $$p(M_h)f=(M_h-h(x_0)I)f$$$$=(h-h(x_0))f$$$$=M_{h-h(x_0)}f.$$ Hence $p(M_h)=M_{p(h)}.$ By Theorem \ref{M_h Compact}, $p(M_h)$ is a TDZ in $\mathcal{B}(C(X))$. Hence $M_h$ is a polynomially TDZ in $\mathcal{B}(C(X))$.
%\end{proof}
\subsection{ Polynomially topological divisors of zero }\label{sec3.4}\

Motivated by the notion of polynomially compact operators\cite{Olsen}, we give the the following definition.
\begin{definition}
	Let $\mathfrak{B}$ be a Banach algebra. An element $z\in \mathfrak{B}$ is said to be polynomially TDZ if $p(z)$ is a TDZ for some non-zero complex polynomial $p$.
\end{definition}
\begin{theorem}
	Every element in $C(X)$ is a polynomially TDZ. 
\end{theorem}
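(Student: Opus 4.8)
The plan is to reduce the statement to the characterization of topological divisors of zero already established in Theorem \ref{CHS TDZ}, which asserts that $g\in C(X)$ is a TDZ precisely when $g$ is singular in $C(X)$. Since $X$ is compact Hausdorff, an element $g\in C(X)$ is regular (in the sense of Definition \ref{anurag1}) exactly when $1/g$ is again continuous, i.e. when $g$ vanishes nowhere; hence $g$ is singular if and only if $Z(g)\neq\emptyset$. Thus it suffices, given an arbitrary $f\in C(X)$, to exhibit a single non-zero complex polynomial $p$ for which $p(f)$ acquires at least one zero on $X$.

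First I would fix any point $x_0\in X$ and record the scalar $\lambda=f(x_0)\in\mathbb{C}$. Then I would take the degree-one polynomial $p(z)=z-\lambda$, which is manifestly a non-zero complex polynomial. Interpreting $p(f)$ through the (pointwise) polynomial functional calculus in the commutative algebra $C(X)$ yields the function $p(f)=f-\lambda\cdot 1$, whose value at $x_0$ is $f(x_0)-\lambda=0$.

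Consequently $x_0\in Z(p(f))$, so $Z(p(f))\neq\emptyset$ and $p(f)$ is singular in $C(X)$. By Theorem \ref{CHS TDZ} the element $p(f)$ is therefore a TDZ, and so $f$ is a polynomially TDZ by definition. As $f$ was arbitrary, every element of $C(X)$ is a polynomially TDZ.

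I do not anticipate a genuine obstacle here: the whole content is the elementary observation that a single point evaluation lets us translate $f$ by the constant $f(x_0)$ so that the shifted function is forced to have a zero, after which Theorem \ref{CHS TDZ} does the work. The only point needing a word of care is the degenerate case $X=\emptyset$, where $C(X)$ reduces to the trivial algebra and the claim holds vacuously; I would dispose of this at the outset so that the choice of $x_0$ is legitimate.
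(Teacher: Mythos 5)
Your proof is correct and follows essentially the same approach as the paper: shift $f$ by the scalar $f(x_0)$ via the linear polynomial $p(z)=z-f(x_0)$ so that $p(f)$ vanishes at $x_0$, then invoke Theorem \ref{CHS TDZ}. In fact your version is slightly cleaner, since this single argument works uniformly for every $f$ (including the constant case, where $p(f)=0$ is still a TDZ), whereas the paper needlessly splits into the case where $f$ is already a TDZ and the case where $f$ vanishes nowhere.
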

\begin{proof}
	If $f\in C(X)$ is a TDZ. For the polynomial $p(x)=x,~f$ becomes a polynomially TDZ. \\
	Now, let $f\in C(X)$ be such that $f(x) \neq0 ~\forall~x\in X.$ Let $x_0\in X$ and $p(x)=x-f(x_0).$ Then, $p(f)(x_0)=0.$ Then $p(f)$ is a TDZ in $C(X).$
\end{proof}
\begin{theorem}
	Let $X$ be a compact Hausdorff space and $h:X \to X$ be a continuous map. Let $M_h:C(X) \to C(X)$ be the multiplication operator defined as $M_h f=h f.$ Then $M_h$ is a polynomially TDZ in $\mathcal{B}(C(X))$.   
\end{theorem}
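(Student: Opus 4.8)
The plan is to reduce everything to the two results already proved for multiplication operators on $C(X)$: Theorem \ref{CHS TDZ} (an element of $C(X)$ is a TDZ if and only if it is singular, i.e. if and only if its zero set is nonempty) and Theorem \ref{M_h Compact} ($M_g$ is a TDZ in $\mathcal{B}(C(X))$ if and only if $g$ is a TDZ in $C(X)$). The observation that unlocks the argument is that evaluating a polynomial at $M_h$ again yields a multiplication operator. Since $M_h^k = M_{h^k}$ and $M_1 = I$, for any complex polynomial $p(z)=\sum_{k=0}^{n} a_k z^k$ we have $p(M_h)=\sum_{k=0}^{n} a_k M_{h^k}=M_{p\circ h}$, where $p\circ h$ denotes the function $x\mapsto p(h(x))$ in $C(X)$. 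I would verify this identity first, since it is the one genuinely structural step; everything after it is bookkeeping.

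With this identity in hand, the task becomes to exhibit a single non-zero polynomial $p$ for which $p\circ h$ vanishes somewhere on $X$. This is immediate: fix any point $x_0\in X$ and set $p(z)=z-h(x_0)$, a non-zero polynomial. Then $(p\circ h)(x_0)=h(x_0)-h(x_0)=0$, so $Z(p\circ h)\neq\emptyset$.

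Now I would chain the two theorems. Because $Z(p\circ h)\neq\emptyset$, the function $p\circ h$ is singular in $C(X)$ and hence a TDZ by Theorem \ref{CHS TDZ}. Applying Theorem \ref{M_h Compact} with $g=p\circ h$ shows that $M_{p\circ h}$ is a TDZ in $\mathcal{B}(C(X))$. Since $M_{p\circ h}=p(M_h)$ by the identity above, $p(M_h)$ is a TDZ, and therefore $M_h$ is a polynomially TDZ by definition.

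I do not anticipate a serious obstacle: the substance is carried entirely by the previously established characterizations, and the choice $p(z)=z-h(x_0)$ trivially forces the required zero. The only points demanding care are the operator identity $p(M_h)=M_{p\circ h}$ (in particular, treating the constant term through $M_1=I$ and using that the multiplication operators form a commutative subalgebra) and the harmless assumption that $X$ is nonempty, so that a base point $x_0$ is available.
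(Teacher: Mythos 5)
Your proposal is correct and matches the paper's proof essentially verbatim: the paper also takes $p(z)=z-h(x_0)$, verifies the identity $p(M_h)=M_{p(h)}$ directly, and then chains Theorem \ref{CHS TDZ} with Theorem \ref{M_h Compact} exactly as you do. Your only additions are harmless refinements (stating $p(M_h)=M_{p\circ h}$ for a general polynomial, and noting $X\neq\emptyset$), neither of which changes the argument.
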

\begin{proof}
	Let $x_0 \in X$ and $p(z)=z-h(x_0).$ Clearly $p(h(x))\in C(X)$ and $p(h(x_0))=0.$ Hence by Theorem $\ref{CHS TDZ},~ p(h(x))$ is a TDZ in $C(X)$ which is also a singular element. For each $f\in C(X)$ $$p(M_h)f=(M_h-h(x_0)I)f$$$$=(h-h(x_0))f$$$$=M_{h-h(x_0)}f.$$ Hence $p(M_h)=M_{p(h)}.$ By Theorem \ref{M_h Compact}, $p(M_h)$ is a TDZ in $\mathcal{B}(C(X))$. Hence $M_h$ is a polynomially TDZ in $\mathcal{B}(C(X))$.
\end{proof}

\begin{theorem}
	Let $\mathfrak{B}$ be a Banach algebra and let $z\in \mathfrak{B}$ be the TDZ. Then $z$ is also a polynomially TDZ.
\end{theorem}
\begin{proof}
	Let $z\in \mathfrak{B}$ be the TDZ in $\mathfrak{B}.$ For the polynomial $p(x)=x,$ clearly $p(z)$ is a TDZ. Hence $z$ is a polynomially TDZ in $\mathfrak{B}.$
\end{proof}
\begin{remark}
	The converse of above theorem is not true. 	Let $\mathfrak{B}$ be an arbitrary unital Banach algebra with the unit element denoted as $e$ in $\mathfrak{B}.$ Let $p(x)=x-1,$ then $p(e)=e-e=0$ which is a TDZ, while identity $e$ is not a TDZ in $\mathfrak{B}$.
\end{remark}

\begin{theorem}\label{PTDZ L^inf}
	In $L^\infty(\mu)$ every element is a polynomially TDZ.
\end{theorem}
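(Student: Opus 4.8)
The plan is to reduce everything to Theorem \ref{ess.range(f)}, which says that $h\in L^\infty(\mu)$ is a TDZ exactly when $0\in\textrm{ess.range}(h)$. Thus, to show a given $h$ is a polynomially TDZ, it suffices to produce a non-zero polynomial $p$ with $0\in\textrm{ess.range}(p(h))$. The natural guess is a degree-one polynomial $p(z)=z-\lambda$ for a well-chosen $\lambda\in\mathbb{C}$, so that $p(h)=h-\lambda$ and the target condition becomes $\lambda\in\textrm{ess.range}(h)$.

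First I would establish that $\textrm{ess.range}(h)$ is non-empty, so that such a $\lambda$ exists. Since $h\in L^\infty(\mu)$, its values lie, up to a null set, in the compact disk $\overline{D}(0,\|h\|_\infty)$. If $\textrm{ess.range}(h)$ were empty, then by Definition \ref{anurag5} every point $\lambda$ of this disk would admit an open neighborhood $U_\lambda$ with $\mu(h^{-1}(U_\lambda))=0$; extracting a finite subcover $U_{\lambda_1},\dots,U_{\lambda_m}$ of the disk would force $\mu(h^{-1}(\overline{D}(0,\|h\|_\infty)))=0$, contradicting that $h^{-1}(\overline{D}(0,\|h\|_\infty))$ has full positive measure. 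Hence I may fix some $\lambda\in\textrm{ess.range}(h)$ and set $p(z)=z-\lambda$.

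It then remains to check the elementary translation fact $0\in\textrm{ess.range}(h-\lambda)$: for any neighborhood $V$ of $0$, the set $V+\lambda$ is a neighborhood of $\lambda$, and $(h-\lambda)^{-1}(V)=h^{-1}(V+\lambda)$ has positive measure precisely because $\lambda\in\textrm{ess.range}(h)$. So $0\in\textrm{ess.range}(p(h))$, and Theorem \ref{ess.range(f)} yields that $p(h)$ is a TDZ, making $h$ a polynomially TDZ. The only genuinely delicate step is the non-emptiness of the essential range via the compactness/finite-subcover argument; the remaining steps are a direct unwinding of Definition \ref{anurag5} together with Theorem \ref{ess.range(f)}, and it is worth remarking that a linear polynomial always suffices, so no higher-degree $p$ is ever needed.
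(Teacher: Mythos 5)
Your proof is correct and follows essentially the same route as the paper: fix $\alpha\in\textrm{ess.range}(h)$, take the linear polynomial $p(z)=z-\alpha$, observe $0\in\textrm{ess.range}(p(h))$, and invoke Theorem \ref{ess.range(f)}. The only difference is that you explicitly justify, via a compactness/finite-subcover argument, that $\textrm{ess.range}(h)$ is non-empty (assuming $\mu\neq 0$), a point the paper's proof takes for granted.
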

\begin{proof}
	Let $h\in L^\infty(\mu)$ and $\alpha \in \textrm{ess.range}(h).$ Then for the polynomial $p(x)=x-\alpha,$ clearly $0\in \textrm{ess.range}(p(h)).$ Hence by the Theorem \ref{ess.range(f)}, $p(h)$ is a TDZ. Therefore $h$ is a polynomially TDZ. 
	%	If $f$ is a TDZ. If we take the polynomial $p(x)=x,$ then clearly $f$ will be a polynomially TDZ.\\
	%	If  $f$ is a not a TDZ
\end{proof}
\begin{theorem}
	Let $h \in L^\infty(\mu)$ and $1\leq p\leq\infty.$ Define the multiplication operator $M_h:L^p(\mu) \to L^p(\mu)$ by $M_h(f)=h \cdot f=h f$. Then $M_h$ is a polynomially TDZ in $\mathcal{B}(L^p(\mu)).$
\end{theorem}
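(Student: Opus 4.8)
The plan is to reduce the statement to the already established fact (Theorem \ref{PTDZ L^inf}) that every element of $L^\infty(\mu)$ is a polynomially TDZ, by exploiting that the assignment $g \mapsto M_g$ is an algebra homomorphism from $L^\infty(\mu)$ into $\mathcal{B}(L^p(\mu))$. First I would record the elementary identities $M_g M_k = M_{gk}$ and $M_1 = I$, which show that for any complex polynomial $p$ one has $p(M_h) = M_{p(h)}$; substituting $g = p(h)$ converts a polynomial in the operator $M_h$ into the multiplication operator induced by the corresponding polynomial in the symbol $h$.

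Next I would choose the polynomial. Since $\mu$ is $\sigma$-finite and $h \in L^\infty(\mu)$, the essential range of $h$ is non-empty; fix any $\alpha \in \textrm{ess.range}(h)$ and set $p(x) = x - \alpha$. Then $0 \in \textrm{ess.range}(h - \alpha) = \textrm{ess.range}(p(h))$, so by Theorem \ref{ess.range(f)} the element $p(h) = h - \alpha$ is a TDZ in $L^\infty(\mu)$; this is exactly the construction carried out in the proof of Theorem \ref{PTDZ L^inf}.

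Finally I would transport this back to the operator level. By the identity from the first step, $p(M_h) = M_{p(h)}$, and since $p(h)$ is a TDZ in $L^\infty(\mu)$, Theorem \ref{Anurag10} yields that $M_{p(h)}$, and hence $p(M_h)$, is a TDZ in $\mathcal{B}(L^p(\mu))$. As $p$ is a non-zero polynomial, this is precisely the assertion that $M_h$ is a polynomially TDZ.

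There is no serious obstacle here; the argument is a clean concatenation of the homomorphism property with Theorems \ref{ess.range(f)}, \ref{Anurag10} and \ref{PTDZ L^inf}. The only points deserving a word of care are the verification that $p(M_h) = M_{p(h)}$ (which rests on $M_h$ commuting with scalar multiples of the identity together with the multiplicativity $M_g M_k = M_{gk}$) and the non-emptiness of $\textrm{ess.range}(h)$, which is guaranteed by the $\sigma$-finiteness of the measure space.
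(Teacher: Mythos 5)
Your proposal is correct and follows essentially the same route as the paper: the paper's proof simply cites Theorem \ref{PTDZ L^inf} to obtain a non-zero polynomial $p$ with $p(h)$ a TDZ and then applies Theorem \ref{Anurag10} together with the identity $p(M_h)=M_{p(h)}$, exactly as you do. Your write-up merely unfolds the proof of Theorem \ref{PTDZ L^inf} (choosing $p(x)=x-\alpha$ for $\alpha\in\textrm{ess.range}(h)$) and makes the homomorphism identity explicit, which the paper leaves implicit.
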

\begin{proof}
	Let $h \in L^\infty(\mu).$ Then by Theorem \ref{PTDZ L^inf} there exists a non-zero polynomial $p$ such that $p(h)$ is a TDZ. Consequently by the Theorem \ref{Anurag10}, $p(M_h)=M_{p(h)}$ is a TDZ in $\mathcal{B}(L^p(\mu)).$ Hence $M_h$ is a polynomially TDZ in $\mathcal{B}(L^p(\mu)).$  	
\end{proof}

\subsection{Strongly topological divisors of zero}\label{sec3.5}\

The following notion is a generalization of the concept of TDZ.
\begin{definition}
	Let $X$ be a non-empty set. An operator $T\in \mathcal{B}(X)$ is called strongly TDZ if there exists a sequence $\{T_n\}_{n=1}^\infty$ in $\mathcal{B}(X)$ such that 
	\begin{enumerate}
		\item $\|T_n\|=1~\forall~n\in \mathbb{N}$
		\item and $\|T_nTx\|\to 0$ or $\|TT_nx\|\to 0$ as $n \to 0~~\forall x\in X.$  
	\end{enumerate}
\end{definition}

\begin{theorem}\label{STD1}
	Let $X$ be a Banach space. If $T \in \mathcal{B}(X)$ is a TDZ then $T$ is a strongly TDZ.
\end{theorem}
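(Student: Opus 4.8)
The plan is to reuse the very sequence that already witnesses the TDZ property, exploiting the fact that convergence in operator norm is strictly stronger than the pointwise (strong operator) convergence demanded by the definition of strongly TDZ. No fresh construction is needed; the entire content of the statement is that norm convergence forces strong operator convergence.

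First I would unpack the hypothesis. Since $T$ is a TDZ in the Banach algebra $\mathcal{B}(X)$, there is a sequence $\{T_n\}_{n=1}^\infty$ in $\mathcal{B}(X)$ with $\|T_n\|=1$ for every $n$ such that either $\|TT_n\|\to 0$ or $\|T_nT\|\to 0$ as $n\to\infty$. By symmetry it suffices to treat the first case, the argument in the second being identical with the roles of the two factors interchanged.

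Next I would fix an arbitrary $x\in X$ and estimate, using submultiplicativity of the operator norm,
$$\|TT_n x\|\le \|TT_n\|\,\|x\|.$$
Since $\|TT_n\|\to 0$ while $\|x\|$ is a fixed constant, the right-hand side tends to $0$, whence $\|TT_n x\|\to 0$. As $x$ was arbitrary, the same sequence $\{T_n\}$ satisfies $\|T_n\|=1$ for all $n$ together with $\|TT_n x\|\to 0$ for every $x\in X$, which is exactly the requirement for $T$ to be a strongly TDZ.

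I do not anticipate any genuine obstacle here: the only thing to verify is that the weaker, pointwise conclusion follows from the uniform one, and this is immediate from the displayed norm estimate. The sole point worth flagging is the bookkeeping of the left/right dichotomy—carrying the case $\|TT_n\|\to 0$ over to the conclusion $\|TT_n x\|\to 0$, and likewise $\|T_nT\|\to 0$ to $\|T_nTx\|\to 0$—which the symmetry remark disposes of cleanly.
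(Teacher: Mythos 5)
Your proof is correct and follows exactly the paper's own argument: reuse the TDZ witness sequence $\{T_n\}$ and pass from norm convergence of $TT_n$ (or $T_nT$) to pointwise convergence via the estimate $\|TT_nx\|\le\|TT_n\|\,\|x\|$. Nothing is missing; the left/right dichotomy is handled the same way in the paper.
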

\begin{proof}
	If $T \in \mathcal{B}(X)$ is a TDZ then there exists a sequence $\{T_n\}_{n=1}^\infty$ such that $\|T_n\|=1~\forall~ n\in \mathbb{N}$ and $\|T_nT\|\to 0$ or $\|TT_n\|\to 0$ as $n \to \infty.$ Let $x\in X$ then $$\|TT_nx\|\leq \|TT_n\|\|x\|\to 0 ~or~ \|T_nTx\|\leq \|T_nT\|\|x\|\to 0 ~as ~n \to 0.$$ Thus $TT_nx \to 0$ or $T_nTx \to 0~\forall x\in X.$ Hence $T$ is a strongly TDZ.
\end{proof}
\begin{remark}
	The subsequent example illustrates that the converse of the above theorem \ref{STD1} is not true.
\end{remark}
\begin{example}
	Let $T=I\in \mathcal{B}(\ell^p)~(1\leq p\leq \infty)$ be the indentity operator. For each $n\in \mathbb{N},$ define $T_n:\ell^p\to \ell^p$ as $$T_n(x)=(0,0,...,x_{n+1},0,0,...).$$ Clearly $\|T_n\|=1~\forall~n\in \mathbb{N}$ and $\|TT_n(x)\|=|x_{n+1}|\to 0$ as $n \to \infty.$ Hence $T$ is a strongly TDZ but not a TDZ in $\mathcal{B}(\ell^p)$.  
\end{example}
\begin{theorem}\label{STD2}
	Let $H$ be a separable Hilbert space and $T\in \mathcal{B}(H).$ Then $T$ is a strongly TDZ in $\mathcal{B}(H)$.
\end{theorem}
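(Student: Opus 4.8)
The plan is to exploit the separability of $H$ directly by building the witnessing sequence $\{T_n\}$ out of the coordinate projections attached to an orthonormal basis. Since $H$ is a separable infinite-dimensional Hilbert space, I would fix an orthonormal basis $\{e_n\}_{n=1}^\infty$ and, for each $n$, define $T_n \in \mathcal{B}(H)$ to be the rank-one orthogonal projection onto the line spanned by $e_n$, namely $T_n x = \langle x, e_n\rangle e_n$. Each $T_n$ is a nonzero orthogonal projection, so $\|T_n\| = 1$ for every $n$, which secures the first requirement in the definition of a strongly TDZ with no effort.

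The crux is then to verify the pointwise decay $\|T T_n x\| \to 0$ for every $x \in H$. First I would record that $\|T_n x\| = |\langle x, e_n\rangle|$. Fixing $x$, Bessel's inequality (indeed Parseval's identity, since $\{e_n\}$ is a basis) gives $\sum_{n=1}^\infty |\langle x, e_n\rangle|^2 = \|x\|^2 < \infty$, which forces the summands to tend to zero, hence $\|T_n x\| \to 0$ as $n \to \infty$. Because $T$ is bounded, $\|T T_n x\| \le \|T\|\,\|T_n x\| \to 0$. Thus $\{T_n\}$ satisfies both clauses of the definition (through the branch $\|T T_n x\| \to 0$), so every $T \in \mathcal{B}(H)$ is a strongly TDZ, with no further hypotheses on $T$.

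I expect the only genuine subtlety — and the point at which the argument actually uses the hypotheses — to be the passage from $\|T_n\| = 1$ to the pointwise nullity $T_n x \to 0$. This is precisely what separates \emph{strongly} TDZ from TDZ: the $T_n$ do not converge to $0$ in operator norm, yet they annihilate each fixed vector in the limit, and it is the square-summability of the Fourier coefficients of a fixed vector that makes this possible. The mechanism requires an infinite orthonormal sequence, so the argument is intrinsically infinite-dimensional. Accordingly I would state (or read) the theorem for an infinite-dimensional separable $H$; note that in finite dimensions the conclusion genuinely fails for invertible $T$, since $\|T T_n x\| \to 0$ for all $x$ would, upon applying $T^{-1}$, force $T_n \to 0$ in norm and contradict $\|T_n\| = 1$.
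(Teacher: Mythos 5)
Your proposal is correct and follows essentially the same route as the paper: both build norm-one orthogonal projections from an orthonormal basis that tend to zero in the strong operator topology and then compose with the bounded operator $T$; the paper uses the tail projections $T_n x=\sum_{k=n+1}^{\infty}\langle x,e_k\rangle e_k$ (whose pointwise decay is the tail of the convergent Fourier series) where you use the rank-one projections $T_n x=\langle x,e_n\rangle e_n$ (decay via Bessel's inequality), an immaterial variation. Your closing remark is also accurate: the argument, like the paper's, implicitly requires $H$ to be infinite-dimensional, since in finite dimensions pointwise convergence forces norm convergence and the conclusion fails for invertible $T$.
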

\begin{proof}
	Let $\{e_n:n\geq1, \|e_n\|=1\}$ be an orthonormal basis for $H$. Then for each $x\in H$ there exists a unique sequence of scalars $\{\alpha_n\}_{n=1}^\infty$ such that $x=\sum_{k=1}^{\infty}\alpha_ke_k.$ For each $n\in \mathbb{N},$ define $T_n: H \to H$ by $$T_n(x)=\sum_{k=n+1}^{\infty}\alpha_ke_k.$$ Clearly $\|T_n(x)\|\leq \|x\|$ and $T_n(e_{n+1})=e_{n+1}.$ This implies that $\|T_n\|=1~\forall~n\in \mathbb{N}.$ Now observe that $$\|T_n(x)\|=\|\sum_{k=n+1}^{\infty}\alpha_ke_k\| \to 0 ~\text{as} ~n \to \infty.$$ Hence for each $T\in \mathcal{B}(H)$ and $H\in H$ $$\|TT_n(x)\| \leq \|T\|\|T_n(x)\|\to 0 ~as ~n \to \infty.$$ Therefore $T$ is a strongly TDZ in $\mathcal{B}(H)$.
\end{proof} 
\begin{remark}
	We conjecture that in the above theorem $H$ can be replaced by any Banach space with Schauder basis.
\end{remark}
\begin{remark}
	Since the space $\ell^p~(1\leq p\leq \infty)$ has $\{e_n\}_{n=1}^\infty$ as a Schauder basis, where each $e_n$ is defined as $e_n=(0,0,...,1,0,0,...).$ Therefore by Theorem \ref{STD2}, each $T\in \mathcal{B}(\ell^p)~(1\leq p<\infty)$ is a strongly TDZ in $\mathcal{B}(\ell^p)$.
\end{remark}
%	\begin{corollary}
	%		Let $H$ be separable Hilbert space and $T\in \mathcal{B}(H).$ Then $T$ is strongly TDZ in $\mathcal{B}(H).$
	%	\end{corollary}
%\begin{proof}
%	Since every separable Hilbert space has a Schauder basis, hence by Theorem \ref{STD2}, each $T\in \mathcal{B}(H)$ is strongly TDZ.
%\end{proof}
In the following example, we construct a class of TDZ in $\mathcal{B}(\ell^p)~(1\leq p\leq \infty).$ 

\begin{example}\label{comp}
	Let $c_0=\{(y_n)_{n=1}^\infty: y_n\in \mathbb{C}~\forall~ n\in \mathbb{N}~\text{and}~y_n \to 0~\text{as}~n\to \infty\}.$ Let $y\in c_0$ and $T:\ell^p \to \ell^p$ be defined by $T(x):=(x_1y_1,x_2y_2,...,x_ny_n,...)$. Then $T$ is TDZ in $\mathcal{B}(\ell^p).$
\end{example}
\begin{proof}
	For each $n\in \mathbb{N},$ define $T_n:\ell^p \to \ell^p$ by $$T_n(x)=(0,0,...,x_{n+1}y_{n+1},x_{n+2}y_{n+2},...).$$ Then $\|T_n\|=1$ and $T_nT(x)=(0,0,...,x_{n+1}y_{n+1},x_{n+2}y_{n+2},...).$ Clearly, for each $x\in \ell^p$ $$\|T_nTx\|\leq \sup_{k\geq n+1}|y_k|\|x\|.$$ Since $y\in c_0,$ then $\sup_{k\geq n+1}|y_k|\to 0$ as $n \to 0.$ Therefore $\|T_nTx\|\to 0$ as $n \to 0.$ Hence $T$ is a TDZ.
\end{proof}
\begin{remark}
	We note that the class of operators in the above Example \ref{comp} forms a class of compact operators.
\end{remark}

%	\begin{theorem}\label{STD IN B(ell^p)}
	%		Let $T\in \mathcal{B}(\ell^p)~(1\leq p<\infty).$ Then $T$ is a strongly TDZ in $\mathcal{B}(\ell^p)$.
	%	\end{theorem}
%	\begin{proof}
	%		Let $\{e_n\}_{n=1}^\infty$ be a sequence in $\ell^p$ which is Schauder basis defined as $e_n=(0,0,...,1,0,0,...).$ For each $x\in \ell^p$ there exists a unique sequence of scalars $\{\alpha_n\}_{n=1}^\infty$ such that $x=\sum_{k=1}^{\infty}\alpha_ke_k.$ For each $n\in \mathbb{N},$ let $T_n:\ell^p \to \ell^p$ be defined as $$T_n(x)=\sum_{k=n+1}^{\infty}\alpha_ke_k.$$ Then $$\|T_n(x)\|=(\sum_{k=n+1}^{\infty}|\alpha_k|^p)^\frac{1}{p}\leq \|x\| ~\text{and}~ T_n(e_{n+1})=e_n.$$ Hence $\|T_n\|=1~\forall~n\in \mathbb{N}.$ Clearly $\|T_n(x)\|\to 0$ as $n \to \infty.$ Now for each $T\in \mathcal{B}(\ell^p)$ and $x\in \ell^p$ $$\|TT_n(x)\| \leq \|T\|\|T_n(x)\|\to 0 ~as ~n \to \infty.$$ Therefore $T$ is a strongly TDZ in $\mathcal{B}(\ell^p)$.
	%	\end{proof}

%	\begin{proof}
	%		Clearly identity operator $I\in \mathcal{B}(\ell^p),$ hence by Theorem \ref{STD2} it is a strongly TDZ. Since $I$ is an invertible element, therefore it is not a TDZ in $\mathcal{B}(\ell^p)$.
	%	\end{proof}

%	\subsection{TDZ in the disk algebra $\mathcal{A}(\mathbb{D})$} \label{sec3.1}
%	In the following theorem we characterize the TDZ in the disk algebra $\mathcal{A}(\mathbb{D}).$

\end{document}